\documentclass[11pt]{article}

\usepackage[T1]{fontenc}
\usepackage{amsmath,amssymb,amsthm,mathtools}
\usepackage{libertinus}
\usepackage[a4paper,margin=30mm]{geometry}
\usepackage{microtype}
\usepackage{enumitem}
\usepackage{xcolor}
\usepackage[
  colorlinks=true,
  linkcolor=blue!50!black,
  citecolor=blue!50!black,
  urlcolor=blue!50!black,
  pdftitle={A Proof of the Isaacs--Navarro--Wolf Conjecture},
  pdfauthor={Quanfu Yan and Jiping Zhang}
]{hyperref}

\setlist{nosep}
\newtheorem{theorem}{Theorem}[section]
\newtheorem{lemma}[theorem]{Lemma}
\newtheorem{proposition}[theorem]{Proposition}

\theoremstyle{remark}

\newcommand{\Irr}{\operatorname{Irr}}
\newcommand{\Fitt}{\operatorname{F}}
\newcommand{\rad}{\operatorname{rad}}
\newcommand{\Ftwo}{\mathbf F_2}
\newcommand{\normal}{\mathrel{\triangleleft}}

\title{A Proof of the Isaacs--Navarro--Wolf Conjecture}
\author{Quanfu Yan \and Jiping Zhang}
\date{}

\begin{document}

\maketitle

\begin{abstract}
An element of a finite group is non-vanishing if no irreducible complex character vanishes on it.  We prove that every non-vanishing element of a finite solvable group belongs to the Fitting subgroup.  The proof combines the reduction of Isaacs, Navarro, and Wolf with an argument using finite symplectic spaces and Clifford theory.
\end{abstract}

\noindent\textit{2020 Mathematics Subject Classification.}
20C15, 20D10, 20D15

\noindent\textit{Keywords.}
Non-vanishing element, solvable group, Fitting subgroup

\section{Introduction}

Let $G$ be a finite group.  An element $x\in G$ is called \emph{non-vanishing} if $\chi(x)\ne0$ for every $\chi\in\Irr(G)$.  Isaacs, Navarro, and Wolf introduced these elements and conjectured the assertion in the following theorem \cite{INW1999}.

\begin{theorem}
\label{thm:main}
If $G$ is solvable and $x$ is non-vanishing in $G$, then $x\in\Fitt(G)$.
\end{theorem}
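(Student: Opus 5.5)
We argue by a minimal counterexample and follow the reduction of Isaacs, Navarro, and Wolf. Let $G$ be a solvable group of minimal order with a non-vanishing element $x\notin\Fitt(G)$. Non-vanishing elements map to non-vanishing elements in quotients, because $\Irr(G/N)\subseteq\Irr(G)$. Hence $xN\in\Fitt(G/N)$ for every nontrivial normal subgroup $N$.

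The first goal is to show that $G$ has a unique minimal normal subgroup. If $N_1$ and $N_2$ were distinct minimal normal subgroups, then $x$ would lie in the preimage of $\Fitt(G/N_1)\cap\Fitt(G/N_2)$. Since $N_1\cap N_2=1$, this preimage embeds in $\Fitt(G/N_1)\times\Fitt(G/N_2)$, so it is nilpotent and normal in $G$, and therefore $x\in\Fitt(G)$, a contradiction. Next we push toward the standard INW configuration. Write $x=x_px_{p'}$. We use the INW results that non-vanishing elements of odd order, and elements whose image is central modulo suitable sections, already lie in $\Fitt(G)$. The aim is to reduce to the following setup: $\Fitt(G)=V$ is an elementary abelian $q$-group, irreducible as a $G/V$-module; $x$ is a $2$-element (indeed INW show that the open case is $|G:\Fitt(G)|$ even with $x$ of $2$-power order); and $x$ acts nontrivially on $V$. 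Clifford theory is the main tool here. A character $\lambda\in\Irr(V)$ whose stabilizer does not contain $x$ gives an induced character $\chi=\lambda^G$ from the inertia group. If $x$ lies in no conjugate of $I_G(\lambda)$, then $\chi(x)=0$. So non-vanishing forces $x$ to fix a member of every $G$-orbit on $\widehat V=\Irr(V)$. The problem then becomes orbit combinatorics: we want a regular-type orbit of $G/V$ on $\widehat V$, or at least an orbit no point of which is fixed by $x$.

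The main obstacle is the case in which $G/V$ acts on $V$ so that every orbit meets $C_{\widehat V}(x)$. This is where INW get stuck. It is essentially the situation where a normal extraspecial (or symplectic-type) $r$-subgroup $E$ of $G/V$ acts, $V$ is a sum of faithful $E$-modules, and $G/V$ acts on $E/Z(E)$ as a subgroup of $Sp(2n,r)$. This is where we bring in finite symplectic spaces. We pass to $W=E/Z(E)$ with its symplectic form, and work in $\overline x\in Sp(W)$. If $x$ centralizes $E$ modulo $Z(E)$, an earlier reduction puts $x$ in the Fitting subgroup of a smaller section, so we may assume $\overline x$ acts nontrivially on $W$. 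Then we build an irreducible character of $VE\langle x\rangle$-type sections of the form $\theta^G$, where $\theta$ is a fully ramified character over $Z(E)$. We want the extension of $\theta$ to $I_G(\theta)$ to vanish at $x$, and the value of such an extension at $x$ is governed, up to a root of unity, by $\pm r^{\dim C_W(x)/2}$ twisted by a character of $C_W(x)$. The hard step is to choose a nondegenerate subspace or an isotropic vector $w\in W$ with $[w,x]\neq0$, in such a way that the corresponding linear character, tensored with the Glauberman-type extension, sums to $0$ on the coset of $x$. Concretely, we use the action of $x$ on $W$ to pick $w\notin C_W(x)$ with the symplectic pairing $\langle w,w^x\rangle$ prescribed, and we produce a character $\mu$ of $C_W(x)$ that is nontrivial on $[W,x]\cap C_W(x)$. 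Orthogonality then kills the value. If this symplectic lemma holds, the induced character vanishes at $x$, contradicting minimality and completing the proof. I expect that verifying the existence of such a $w$ when $x$ is an involution acting as a transvection on $W$ with $r=2$ will be the delicate subcase, and I would handle it by separate explicit computation in $Sp(2,2)\cong S_3$.
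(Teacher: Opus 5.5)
\textbf{There is a genuine gap: the step that carries the whole theorem is assumed rather than proved, and the setting you propose for it is the wrong one.}

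Your opening reduction is sound and agrees with the paper. A minimal counterexample has a unique minimal normal subgroup, and non-vanishing forces $x$ to fix a member of every $G$-orbit on $\Irr(V)$. The paper adds three further steps: it uses $\Phi(G)=1$ and Gasch\"utz to get $G=V\rtimes H$ with $\Fitt(G)=V$; it uses Theorem~4.2 of Isaacs--Navarro--Wolf to make $h=xV$ an involution of $\Fitt(H)$; and it shows $p\ne2$.

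After that, your proposal is a plan rather than a proof. The ``symplectic lemma'' on which everything rests is never stated precisely, let alone proved, and you conclude only ``if this symplectic lemma holds.'' The framework you set up for it also points the wrong way:
\begin{itemize}
\item By minimality $h\in\Fitt(H)$. In the configuration that actually arises, the group $P=\langle h^H\rangle$ (taken modulo a normal subgroup $K$ in the semidihedral case) satisfies $P'\le Z(P)$. So conjugation by $h\in P$ is \emph{trivial} on $P/Z(P)$.
\item The case you set aside (``$x$ centralizes $E$ modulo $Z(E)$'') is therefore exactly the case that occurs. It cannot be dismissed by passing to a section, since $xV\in\Fitt(G/V)$ is already known and contradicts nothing.
\item A value formula $\pm r^{\dim C_W(x)/2}$ for Glauberman-type extensions needs coprime action. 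That fails here, because $h$ is a $2$-element acting on a $2$-group.
\end{itemize}
What is needed instead is vanishing forced by noncentrality inside $P$. If $\beta$ is an $H$-invariant linear character of $P'$, and every conjugate $y$ of $h$ admits $p\in P$ with $\beta([y,p])=-1$, then every $\theta\in\Irr(P\mid\beta)$ vanishes on $h^H$. Hence so does every $\chi\in\Irr(H\mid\theta)$ (Lemma~\ref{lem:central-commutator-zero}).

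\textbf{The missing idea is how to guarantee such $p$, and this is hard only when $V$ is imprimitive.} In the quasi-primitive case, Wolf's classification leaves three explicit cases, and there the images of the conjugates of $h$ already span a nondegenerate space.

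In general the paper proceeds as follows.
\begin{itemize}
\item It embeds $H$ in $J\wr S$ with $V\cong W^n$ and $W$ quasi-primitive. It keeps \emph{every} nonidentity component of $h$ an involution of $\Fitt(J)$ whose conjugates' fixed spaces cover $W$ (Lemma~\ref{lem:component-reduction}).
\item In Wolf's cases, each $0\ne w\in W$ has a unique fixer $t_w$ in $\Fitt(J)$. The images of the $t_w$ span each symplectic component $E_i$.
\item Now $\mathcal U=\pi(P)$ may be a proper, degenerate subspace of $\mathcal E=E_1\oplus\cdots\oplus E_n$, and one must show $\pi(h)\notin\rad_B(\mathcal U)$.
\item Here the covering hypothesis is used a second time. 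If $\pi(h)$ were in the radical, $\mathcal U$ would be totally isotropic. Then $\dim(\mathcal U\cap\bigoplus_{i\in I}E_i)\le\frac12\sum_{i\in I}\dim E_i$ for all $I$.
\item Rado's theorem then gives $x_i\in T_i$ independent modulo $\mathcal U$. Choose $w_i$ with $t_{w_i}\mapsto x_i$. A conjugate $y$ of $h$ fixing $(w_1,\ldots,w_n)$ then has $0\ne\pi(y)\in\mathcal U\cap\bigoplus_i\langle x_i\rangle=0$, a contradiction.
\end{itemize}

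Your proposal has none of these ingredients: no componentwise structure, no use of the covering condition in the symplectic step, and no argument excluding $\pi(h)\in\rad_B(\mathcal U)$. As it stands, it does not prove the theorem.
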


The original paper \cite{INW1999} proves the assertion when $x$ has odd order and, more generally, when the Sylow $2$-subgroups of $G$ are abelian.  For arbitrary finite groups, Dolfi, Navarro, Pacifici, Sanus, and Tiep \cite{DolfiEtAl2010} proved that a non-vanishing element $x$ belongs to $\Fitt(G)$ whenever $(|x|,6)=1.$  For solvable groups, let $F_i(G)$ denote the $i$th term of the ascending Fitting series.  Moret\'o and Wolf showed that every non-vanishing element belongs to $F_{10}(G)$, and Yang improved this bound to $F_8(G)$ \cite{MoretoWolf2004,Yang2009}.  Wolf  \cite{Wolf2020} later proved the conjecture when the irreducible constituents of $\Fitt(G)/\Phi(G)$ are primitive modules. Another special case appears in \cite{AhanjidehRobati2023}.  The conjecture nevertheless remained open in general.

We briefly describe the main ideas of the proof.  Lemma \ref{lem:component-reduction} refines Wolf's quasi-primitive reduction so that all relevant components are retained simultaneously. Lemma~\ref{lem:commutator-data} places Wolf's three cases \cite{Wolf2014} in a common symplectic setting, in which commutators are described by an alternating bilinear form over $\Ftwo$. The symplectic argument in Section~4 uses the hypothesis in Theorem~\ref{thm:linear} that every vector is fixed by some conjugate of $h$ to find two vectors on which this alternating bilinear form has value $1$.  A Clifford-theoretic argument then gives an irreducible character that vanishes at $h$.

All groups in the paper are finite, and all ordinary characters are complex characters.  We use the notation of \cite{Isaacs1976}.  Thus $x^g=g^{-1}xg$, $G'=[G,G]$, and $\Fitt(G)$ is the Fitting subgroup of $G$.  If $V$ is a right $G$-module, then $C_V(x)=\{v\in V:v^x=v\}$.

\section{The quasi-primitive reduction}

\begin{proposition}
\label{prop:linear-reduction}
Theorem~\ref{thm:main} follows from Theorem~\ref{thm:linear} below.
\end{proposition}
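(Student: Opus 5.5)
We argue by contradiction with a minimal counterexample. Let $G$ be solvable and $x\in G$ non-vanishing with $x\notin\Fitt(G)$, chosen with $|G|$ minimal. By \cite{INW1999}, the image of a non-vanishing element in any quotient $G/N$ is again non-vanishing, because $\Irr(G/N)\subseteq\Irr(G)$. So minimality gives $xN\in\Fitt(G/N)$ for every nontrivial normal subgroup $N$. If $G$ has two distinct minimal normal subgroups $N_1,N_2$, then $G$ embeds in $G/N_1\times G/N_2$. Since $\Fitt$ of a direct product is the product of the Fitting subgroups, this forces $x\in\Fitt(G)$. Hence $G$ has a unique minimal normal subgroup. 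Next, $\Fitt(G/\Phi(G))=\Fitt(G)/\Phi(G)$, so $\Phi(G)=1$. Then $V=\Fitt(G)$ is abelian, is the unique minimal normal subgroup, and is complemented: $G=VH$ with $H$ acting faithfully and irreducibly on $V$, an elementary abelian $p$-group. We write $x=vh$. Since $x\notin V$, we have $h\neq1$.

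Now we translate non-vanishing into a linear condition. The characters of $G$ lying over $\lambda\in\Irr(V)=\widehat V$ are induced from the inertia group $I=VH_\lambda$. A standard Clifford computation, as in \cite{INW1999,Wolf2014}, shows the following. If some $\lambda\in\widehat V$ is not fixed by any conjugate of $h$, then a character induced from $\lambda$ vanishes at $x$. Therefore every vector of the dual module $\widehat V$ is fixed by some conjugate of $h$. By \cite{INW1999}, $x$ has even order, so after replacing $x$ by a suitable power we may assume $h$ is a $2$-element. We then reduce further, passing to the setting needed for Theorem~\ref{thm:linear}, which we take to assert that such an $h$ cannot exist in that setting. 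The tool for this is Lemma~\ref{lem:component-reduction}, which refines Wolf's quasi-primitive reduction. Concretely, we take $U\le H$ maximal such that $V$ restricted to $U$ is imprimitive, or we pass to the stabilizer of a homogeneous component. We then replace $(G,x)$ by a section in which the module is quasi-primitive on each retained component. Two properties must be checked along the way. First, the hypothesis ``every vector is fixed by a conjugate of $h$'' passes to the relevant components. Second, the non-vanishing of $x$ on characters induced from the stabilizer is inherited. Lemma~\ref{lem:component-reduction} is stated so that all components are kept simultaneously, and that is exactly what makes this transfer work.

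The final step is to put the quasi-primitive situation into the form of Theorem~\ref{thm:linear}. For a quasi-primitive solvable linear group, $\Fitt(H)$ is either cyclic-by-center or has an extraspecial-type normal $2$-subgroup $E$. Wolf's three cases \cite{Wolf2014} are arranged by Lemma~\ref{lem:commutator-data} into commutator data on $E/Z(E)$, an $\Ftwo$-space with an alternating form. These are precisely the hypotheses of Theorem~\ref{thm:linear}. That theorem then produces $\chi\in\Irr(G)$ with $\chi(x)=0$, a contradiction.

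I expect the main obstacle to be the middle step: transferring the fixed-point condition and the non-vanishing of $x$ through the imprimitive reduction. In particular, a conjugate of $h$ fixing a vector may permute the components nontrivially. I would first try to choose the vector in $\widehat V$ as a product of generic vectors on separate components. This forces any conjugate of $h$ that fixes it to normalize each component.
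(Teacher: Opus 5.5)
Your first steps (minimal counterexample, unique minimal normal subgroup, $\Phi(G)=1$, $G=VH$, and the covering condition from Clifford theory) agree with the paper. The gap is in what comes next. The proposition only asks you to check that $(H,V,h)$, with $h=xV$ itself, satisfies the hypotheses of Theorem~\ref{thm:linear}. Those hypotheses are that $h$ is an \emph{involution} in $\Fitt(H)$, that $V$ has \emph{odd} characteristic, and that the covering condition holds. Your quotient step does give $h\in\Fitt(H)$ (take $N=V$), but you never show $h^2=1$ or $p\ne2$.

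Your substitute, replacing $x$ by a suitable power to make $h$ a $2$-element, does not work. Non-vanishing is preserved by Galois conjugation, hence by generators of $\langle x\rangle$, but there is no reason for it to pass to a non-generating power such as the $2$-part of $x$. Also, a character vanishing at a power of $x$ contradicts nothing. Even granting the replacement, a $2$-element is not an involution.

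The paper keeps $h=xV$ unchanged. Inflation from $H\cong G/V$ shows $h$ is non-vanishing in $H$, so minimality gives $h\in\Fitt(H)$. Theorem~4.2 of \cite{INW1999} then gives $h^2=1$. Finally, if $p=2$, then $C_V(O_2(H))$ is a nonzero $H$-submodule, so $O_2(H)=1$ by faithfulness, contradicting $1\ne h\in O_2(H)$.

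The rest of your plan (the imprimitive reduction via Lemma~\ref{lem:component-reduction}, Wolf's three cases, the commutator forms) belongs to the proof of Theorem~\ref{thm:linear}. That theorem is stated for an arbitrary faithful irreducible module, not only quasi-primitive ones, so none of this is needed here. Once the hypotheses are verified, Theorem~\ref{thm:linear} yields $\psi\in\Irr(H)$ with $\psi(h)=0$, and its inflation $\chi\in\Irr(G)$ satisfies $\chi(x)=\psi(h)=0$.

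Your proposed mechanism would also fail on its own terms. Non-vanishing of $x$ does not pass to stabilizers or other subgroups. Likewise, an irreducible character of a section that is not a quotient of $G$, vanishing at the image of $x$, does not produce a character of $G$ vanishing at $x$. In the paper, only the covering condition, together with the involution and Fitting properties of the components, is carried through the reduction. The vanishing character is then constructed for the whole group $H$ by a Clifford-theoretic argument.
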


The reduction uses the following result on faithful irreducible modules.

\begin{theorem}
\label{thm:linear}
Let $H$ be solvable, let $V$ be a faithful irreducible $H$-module of odd characteristic, and suppose that $1\ne h\in\Fitt(H)$ is an involution satisfying
\begin{equation}
  V=\bigcup_{g\in H}C_V(h^g).
  \label{eq:cover}
\end{equation}
Then $\psi(h)=0$ for some $\psi\in\Irr(H)$.
\end{theorem}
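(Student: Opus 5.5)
We argue by contradiction on a counterexample with $|H|+|V|$ minimal: assume $h$ is non-vanishing in $H$, that is, $\psi(h)\neq0$ for all $\psi\in\Irr(H)$. The plan has three stages.

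\textbf{Reduction to the quasi-primitive, symplectic setting.} First we reduce to the case where $V$ is quasi-primitive, keeping track of all relevant components at once. If $V$ is induced from a subgroup $L$ with a system of imprimitivity $V=V_1\oplus\cdots\oplus V_t$, then an element of $\Fitt(H)$ of order $2$ acts on the components. Condition \eqref{eq:cover}, applied to vectors with generic coordinates in every $V_i$, should force $h$ to normalize each $V_i$. It should also show that the cover condition passes to the stabilizer acting on $V_1$. Lemma~\ref{lem:component-reduction} is designed to carry out exactly this step while retaining all components.

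In the quasi-primitive case, every normal abelian subgroup acts homogeneously and is cyclic-like. Since $h$ is a $2$-element of $\Fitt(H)$, it lies in $O_2(H)$. A quasi-primitive solvable linear group then has a normal subgroup $E$, essentially of extraspecial type, with $E/Z(E)$ a symplectic $\Ftwo$-space $W$. Commutation in $E$ is given by an alternating form $\langle\cdot,\cdot\rangle$ on $W$. Lemma~\ref{lem:commutator-data} is taken to package Wolf's three cases into this common form. The image $\bar h$ of $h$, or a suitable related element, is then a vector of $W$. Conjugation by $H$ acts on $W$ through $H/C_H(W)$, preserving the form.

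\textbf{Producing the two vectors.} Next we use \eqref{eq:cover} to produce $w_1,w_2$ in $W$, both in the $H$-orbit of $\bar h$ or otherwise attached to $h$, with $\langle w_1,w_2\rangle=1$. The idea is as follows. If the orbit of $\bar h$ spanned a totally isotropic subspace, all conjugates of $h$ would commute modulo $Z(E)$. Then $\langle h^H\rangle$ would be essentially abelian and normal, and quasi-primitivity would make it act by scalars on $V$. The fixed spaces $C_V(h^g)$ would then all be $0$ or all $V$. This contradicts $h\ne1$ and \eqref{eq:cover}, since $V\ne0$ is covered. A counting argument on fixed-space dimensions should sharpen this. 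An involution $h$ acting on $V$ with $|C_V(h^g)|\le |V|^{1/2}$ for non-central $h$ in extraspecial action cannot cover $V$ with fewer than about $|V|^{1/2}$ conjugates. This forces a large orbit, which cannot be isotropic.

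\textbf{Building the vanishing character.} Finally, with $\langle w_1,w_2\rangle=1$, so that suitable conjugates $h$ and $h^g$ do not commute modulo $Z(E)$, we construct $\psi$ by Clifford theory. Take a faithful character $\lambda$ of $Z(E)$, or a character $\theta\in\Irr(E)$ or of a suitable normal subgroup $N$ of $H$ with $h\notin N$. Choose $\theta$ so that $h$ moves it, or so that $h$ lies outside the inertia subgroup and hence is not conjugate into it. Any $\psi$ induced from the inertia group then vanishes at $h$. Alternatively, use the fully ramified situation, where $\theta$ extends and $\psi_E=e\theta$ with $\theta$ vanishing off $Z(E)$. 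In that case $\psi(h)=0$ whenever $h\in E\setminus Z(E)$.

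I expect the main obstacle to be the middle stage: extracting a genuinely non-isotropic pair from \eqref{eq:cover} uniformly across Wolf's three cases, including the case where $h$ lies outside $E$. This likely needs the precise dimension estimates for $C_V(h)$ relative to the symplectic data.
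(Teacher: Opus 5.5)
\textbf{The gap is in your first two stages.} Lemma~\ref{lem:component-reduction} does not reduce Theorem~\ref{thm:linear} to the case where $V$ is quasi-primitive as an $H$-module. It only embeds $H$ into $J\wr S$ with $V\cong W^n$ and $W$ quasi-primitive for $J$, and $H$ may be a small subgroup of that wreath product. Your minimal-counterexample hypothesis cannot bridge this, because the conclusion does not lift from components. Minimality applied to $(J,W,c_i)$ gives a character of $J$ vanishing at $c_i$, and nothing converts this into a character of $H$ vanishing at $h$. (Inducing from a component stabiliser produces a sum of values at the various components of $h$, some possibly the identity.) The case $n=1$ is the easy one, since there the images of $t^J$ span the nondegenerate space $E$. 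The real content is $n>1$, and there your Stage~2 breaks down. The relevant pairing lives on $\mathcal E=E_1\oplus\cdots\oplus E_n$ and is $B=\sum_iB_i$, coming from the $\bar H$-invariant character $\beta_0$ of $Z_0=\langle z_1\rangle\times\cdots\times\langle z_n\rangle$. The span $\mathcal U$ of $\mathcal Y=\pi(\bar h)^{\bar H}$ can be a proper, degenerate, even totally isotropic subspace. Total $B$-isotropy does not make $\langle h^H\rangle$ abelian: for instance $[(a,a),(b,b)]=(z,z)\ne1$ although $B=1+1=0$. Even an abelian normal subgroup need not act by scalars on $W^n$, because $V$ is not quasi-primitive for $H$. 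The counting heuristic fails too. The bound $|C_V(h)|\le|V|^{1/2}$ is false as soon as $h$ has identity components, and the size of an orbit in $\mathcal E$ says nothing about isotropy; moreover $\pi$ is far from injective on $h^H$.

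\textbf{The missing idea is the argument that \eqref{eq:cover} forces $\mathcal Y\cap\rad_B(\mathcal U)=\varnothing$.} Suppose one member of $\mathcal Y$ lies in the radical. Since $\mathcal Y$ is a single $\bar H$-orbit, every member does, so $\mathcal U$ is totally isotropic. Hence $\dim(\mathcal U\cap\bigoplus_{i\in I}E_i)\le\frac12\sum_{i\in I}\dim E_i$ for every $I$. Since each $T_i$ spans $E_i$, Rado's theorem (Lemma~\ref{lem:independent-representatives}) applied to the images of the $T_i$ in $\mathcal E/\mathcal U$ yields $x_i\in T_i$ with $\mathcal U\cap\bigoplus_i\langle x_i\rangle=0$. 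Now choose $w_i$ with $t_{w_i}\mapsto x_i$. By the unique-fixer property \eqref{eq:unique-fixer}, a conjugate of $h$ fixing $(w_1,\ldots,w_n)$ would have $\pi$-image a nonzero vector of $\mathcal U\cap\bigoplus_i\langle x_i\rangle$, contradicting \eqref{eq:cover}. Nothing in your proposal plays this role.

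\textbf{Your Stage~3 also needs repair.} The step ``$h$ moves $\theta$, hence is not conjugate into the inertia group'' is a non sequitur. The fully ramified route needs $B$ nondegenerate on $\mathcal U$, which you do not have. What works with the weaker information is Lemma~\ref{lem:central-commutator-zero}. For $\theta\in\Irr(P\mid\beta)$ with $P'\le Z(P)$ one has $\theta(y)=\beta([y,p])\theta(y)$, so $\theta$ vanishes on all of $\bar h^{\bar H}$, and every $\chi\in\Irr(\bar H\mid\theta)$ vanishes at $\bar h$. Finally, in the semidihedral case the components are not of extraspecial type. One must first pass to $P_0/K$ with $K=P_0\cap A_0$ to land in $D_8^n$.
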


\begin{proof}[Proof of Proposition~\ref{prop:linear-reduction}]
Suppose that Theorem~\ref{thm:main} is false, and choose a counterexample $G$ of minimal order with a non-vanishing element $x\notin\Fitt(G)$.  If $1\ne N\normal G$, then $xN$ is non-vanishing in $G/N$, and hence $xN\in\Fitt(G/N)$.  If $N_1$ and $N_2$ were distinct minimal normal subgroups, the inverse images of $\Fitt(G/N_i)$ would have a nilpotent intersection containing $x$, because $N_1\cap N_2=1$.  This would put $x$ in $\Fitt(G)$.  Thus $G$ has a unique minimal normal subgroup, say $A$.

Minimality also gives $\Phi(G)=1$.  Gasch\"utz's theorem \cite[Theorem~1.12]{ManzWolf1993} now yields
\[
  \Fitt(G)=A,\qquad G=A\rtimes H,
\]
where $A$ is an elementary abelian $p$-group and $H\cong G/A$ acts faithfully and irreducibly on $V=\Irr(A)$.  Put $h=xA$.  Inflation from $H\cong G/A$ shows that $h$ is non-vanishing in $H$.  Since $h\ne1$, minimality gives $h\in\Fitt(H)$.

By \cite[Lemma~2.3]{INW1999}, the element $x$ fixes a member of every $G$-orbit on $\Irr(A)$.  The actions of $A$ on $\Irr(A)$ and of $xA$ on $\Irr(A)$ are respectively trivial and equal to the action of $x$. Consequently $h$ fixes a member of every $H$-orbit on $V$, which is equivalent to \eqref{eq:cover}.  Theorem~4.2 of \cite{INW1999} gives $h^2=1$.

Next we show that $p\ne2$.  Otherwise $V$ has characteristic $2$.  A finite $2$-group acting on a nonzero vector space of characteristic $2$ has a nonzero fixed point, so $C_V(O_2(H))\ne0.$ Since $O_2(H)\normal H$, this fixed-point space is $H$-invariant, and hence equals $V$ by irreducibility.  Faithfulness gives $O_2(H)=1$.  On the other hand, the involution $h\in\Fitt(H)$ lies in $O_2(H)$, a contradiction. Theorem~\ref{thm:linear} would now give an irreducible character of $H$ vanishing at $h$, contrary to the non-vanishing of $h$.
\end{proof}

We next record a simultaneous form of the imprimitive reduction needed for Theorem~\ref{thm:linear}.  Wolf's Corollary~2.3 \cite{Wolf2014} follows one selected nonidentity component of a single element; the lemma below simultaneously follows all nonidentity components of a finite set. A $K$-module $U$ is \emph{quasi-primitive} if $U_N$ is homogeneous for every $N\normal K$.  A system of imprimitivity for $U$ is a decomposition of its underlying vector space $U=U_1\oplus\cdots\oplus U_m$ with $m>1$, where $U_i$ are nonzero subspaces which are permuted transitively by $K$.  The subspaces $U_i$ need not be $K$-submodules.

\begin{lemma}
\label{lem:component-reduction}
Let $K$ act faithfully and irreducibly on $U$, and let $\mathcal X\subseteq\Fitt(K)\setminus\{1\}$ be a finite set of involutions. Suppose that, for every $a\in\mathcal X$,
\[
  U=\bigcup_{g\in K}C_U(a^g).
\]
Then there are a faithful irreducible quasi-primitive $J$-module $W$, a transitive permutation group $S$ of degree $n$, and embeddings
\[
  K\hookrightarrow J\wr S=J^n\rtimes S,
  \qquad U\cong W^n,
\]
such that every $a\in\mathcal X$ belongs to the base group.  If $a=(a_1,\ldots,a_n)$, then every $a_i\ne1$ is an involution in $\Fitt(J)$ and
\[
  W=\bigcup_{j\in J}C_W(a_i^j).
\]
\end{lemma}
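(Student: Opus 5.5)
We argue by induction on $\dim U$, following the imprimitive reduction of Wolf \cite[Corollary~2.3]{Wolf2014}. The induction also covers the more general situation in which $K$ is a subgroup of an iterated wreath product and the components already obtained are carried along. If $U$ is quasi-primitive, we take $J=K$, $W=U$, $n=1$ and $S=1$, and there is nothing to prove. Otherwise there is $N\normal K$ with $U_N$ not homogeneous. The homogeneous components $U_1,\dots,U_m$ of $U_N$ are then permuted transitively by $K$ (Clifford), so we obtain a system of imprimitivity. Let $L=N_K(U_1)$. Then $U\cong U_1^K$, and $K$ embeds in $(L/C_L(U_1))\wr S_0$, where $S_0$ is the transitive permutation group induced on the components.

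\textbf{Step 1: the involutions lie in the base group.} Fix $a\in\mathcal X$ and suppose $a$ moves some component, say $U_1^a=U_2\ne U_1$. Pick $u\in U$ with nonzero component $u_i\in U_i$ for every $i$, chosen generically. By hypothesis $u$ is fixed by some conjugate $a^g$. Now $a^g$ also lies in $\Fitt(K)$ and also moves components, so it swaps pairs of them. The fixed vectors of $a^g$ must therefore satisfy $u_j=u_i^{a^g}$ on the swapped pairs. This is a proper linear condition, and each conjugate imposes one. To rule out that finitely many such conditions cover $U$, we use the following observation. Since $a\in\Fitt(K)$, the $2$-group $O_2(K)$ contains $a$. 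Since $N$ and $O_2(K)$ are normal in $K$, the permutation action of $O_2(K)$ on the components is normal in the transitive action of $K$. The main obstacle is here: the naive counting argument may fail when $|U_i|$ is small. Wolf's treatment in \cite{Wolf2014} handles exactly this point (the residue field has odd characteristic, so $|U_i|\ge3$). We invoke his argument: a $2$-element of the Fitting subgroup acting nontrivially on the set of components cannot satisfy the covering condition. Applying this to each $a$ separately places every element of $\mathcal X$ in the kernel of the action on components, that is, in the base group. The argument is simultaneous because the decomposition depends only on $N$, not on $a$.

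\textbf{Step 2: pass to components.} Write $a=(a_1,\dots,a_m)$ with $a_i$ acting on $U_i$. The stabilizer $L$ acts on $U_1$ irreducibly, and faithfully after replacing $L$ by $L/C_L(U_1)$. The image of $\Fitt(K)\cap L$ lies in $\Fitt$ of that quotient, so each $a_1\ne1$ is an involution in $\Fitt(L/C_L(U_1))$. For covering, take $w\in U_1$ and consider the vector $(w,w',\dots)$ with generic entries in the other components. Some $a^g$ fixes it. Since $a^g$ lies in the base group and stabilizes each $U_i$, its first coordinate fixes $w$. That coordinate is a conjugate of some component $a_k$ transported to $U_1$ by an element of $K$; using transitivity we conjugate it into $L$. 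To get conjugates of $a_1$ specifically, we must use all the nonidentity components $a_i$, transported to $U_1$ via coset representatives. For this reason we take the new finite set $\mathcal X_1=\{$transported nonidentity components of all $a\in\mathcal X$ and their conjugates$\}$. We then check covering for each element of $\mathcal X_1$ by choosing the test vector with $w$ placed in the component corresponding to that element.

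\textbf{Step 3: iterate.} Apply induction to $(L/C_L(U_1),U_1,\mathcal X_1)$, since $\dim U_1<\dim U$. This gives $J$, $W$, and an embedding into $J\wr S_1$. Composing wreath products, $(J\wr S_1)\wr S_0\le J\wr(S_1\wr S_0)$, with $S=S_1\wr S_0$ transitive of degree $n$, and $U\cong W^n$. Every $a\in\mathcal X$ stays in the base group, and its nonidentity coordinates are elements of the sets produced at each stage. Those coordinates are involutions in $\Fitt(J)$ satisfying the covering condition on $W$.
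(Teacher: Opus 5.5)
\textbf{Step 1: the central claim is not proved.} You need each $a\in\mathcal X$ to stabilize every component. You note that a naive argument might fail and then appeal to an unspecified argument of Wolf. But that appeal is exactly the main content of the lemma, and the mechanism you describe is not what makes it true. Finitely many proper subspaces can cover $U$ over any finite field (all lines do), so no counting argument based on $|U_i|\ge 3$ will work. Indeed, the lemma has no characteristic hypothesis at all.

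Moreover, with the homogeneous components of $U_N$ for an arbitrary $N$, $K$ need not act primitively on the components. So there is no maximal stabilizer to exploit unless you first coarsen the decomposition.

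The missing idea is structural:
\begin{enumerate}
\item Choose a system of imprimitivity with the fewest components. Then $K$ acts primitively on it, and $K_i=N_K(U_i)$ is maximal in $K$.
\item Put $L=\langle a^K\rangle$. Since $a\in\Fitt(K)$, $L$ is nilpotent and normal in $K$.
\item If $a\notin K_i$, set $M=L\cap K_i$. Then $M\normal K_i$, and the normalizer condition gives $N_L(M)>M$.
\item Elements of $N_L(M)\setminus M$ lie outside $K_i$, so maximality of $K_i$ forces $M\normal K$.
\item A nonzero vector of $U_i$ is fixed by some $a^g$, which must therefore stabilize $U_i$. Hence $a^g\in M$, so $a\in M\le K_i$, a contradiction.
\end{enumerate}

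\textbf{Step 2: the covering check is flawed.} Suppose you place $w$ in one coordinate and fill the others generically, and some $a^g$ fixes this vector. In that coordinate, $a^g$ carries a $\widehat K_1$-conjugate of the component $a_{j'}$. Here $j'$ is determined by the permutation part of $g$, which you do not control. So you learn only that some conjugate of \emph{some} component fixes $w$, not covering for each specific component. Enlarging $\mathcal X_1$ by conjugates does not help, since distinct nonidentity components of $a$ need not be conjugate in $\widehat K_1$.

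The fix is to use the diagonal vector $(w,\ldots,w)$. Every coordinate of the fixing conjugate $a^g$ then fixes $w$. Since the components of $a^g$ are $\widehat K_1$-conjugates of all components of $a$ in permuted order, each nonidentity component of $a$ has a conjugate fixing $w$.

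Your Step 3 (iterating and composing wreath products) is fine and matches the paper.
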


\begin{proof}
We use induction on $\dim U$.  If $U$ is quasi-primitive, take $J=K$, $W=U$, $n=1$, and $S=1$.  Otherwise modular Clifford theory \cite[Theorem~0.1]{ManzWolf1993} supplies a nontrivial system of imprimitivity.  Choose one with as few components as possible,
\[
  U=U_1\oplus\cdots\oplus U_m.
\]
The permutation group induced by $K$ on the $U_i$ is primitive since a nontrivial block system would give a $K$-invariant decomposition with fewer components.  Hence $K_i=N_K(U_i)$ is maximal in $K$.  If $0<U_i'<U_i$ were $K_i$-invariant, the direct sum of its distinct $K$-conjugates would be a nonzero proper $K$-submodule of $U$; thus $U_i$ is an irreducible $K_i$-module.

Fix $a\in\mathcal X$ and set $L=\langle a^K\rangle$.  Then $L\normal K$ is nilpotent.  If $a\notin K_i$, put $M=L\cap K_i$.  The normalizer condition for finite nilpotent groups gives
\[
  M<N_L(M).
\]
Since $L\normal K$, we have $M\normal K_i$.  An element of $N_L(M)\setminus M$ lies outside $K_i$, so $N_K(M)$ strictly contains the maximal subgroup $K_i$.  Hence $M\normal K$.  By assumption, a nonzero vector in $U_i$ is fixed by some conjugate $a^g$.  An element which moves $U_i$ to a different direct summand cannot fix such a vector, and therefore $a^g\in L\cap K_i=M$. Normality of $M$ gives $a\in M$, a contradiction.  Thus every member of $\mathcal X$ lies in the kernel of the permutation action on the components.  This kernel is normal in $K$ and contains $a$, so it contains $L=\langle a^K\rangle$ and $L\le\bigcap_iK_i$.

Let $C_i=C_{K_i}(U_i)$ and $\widehat K_i=K_i/C_i$.  The usual imprimitivity embedding \cite[Lemma~2.8]{ManzWolf1993} gives
\[
  K\hookrightarrow\widehat K_i\wr S_0,
  \qquad U\cong U_i^m,
\]
where $S_0$ is the permutation group induced on the components.  By definition, $\widehat K_i$ acts faithfully on $U_i$, and this action is irreducible by the preceding paragraph.  The image of every nonidentity component of $a$ belongs to $\Fitt(\widehat K_i)$. Indeed, if $B=\bigcap_iK_i$ and $\rho_i:K_i\to\widehat K_i$ is the quotient map, then the $i$th component of $a$ lies in $\rho_i(L)$.  This is a normal nilpotent subgroup of $\widehat K_i$, because $L\normal K$. For $w\in U_i$, by the hypothesis  $U=\bigcup_{g\in K}C_U(a^g),$ some conjugate of $a$ fixes the diagonal vector $\Delta(w)=(w,\ldots,w)$.  Since $a$ has trivial permutation part, wreath-product conjugation shows that each nonidentity component of $a$ has a conjugate in $\widehat K_i$ fixing $w$.  Thus, if $c$ is any nonidentity component of $a$, then
\[
  U_i=\bigcup_{k\in\widehat K_i}C_{U_i}(c^k).
\]
Here the element $k$ may depend on both $c$ and $w$.

Let $\mathcal X_1$ be the set of all these nonidentity components, as $a$ ranges over $\mathcal X$. Since $a$ is an involution in the base group, each of its components is an involution.  By the preceding two paragraphs, $\mathcal X_1$ is a finite set of involutions in $\Fitt(\widehat K_i)\setminus\{1\}$ and satisfies the required fixed-space condition.  Thus $(\widehat K_i,U_i,\mathcal X_1)$ satisfies all the inductive hypotheses and $\dim U_i<\dim U$.  By induction there are a faithful irreducible quasi-primitive $J$-module $W$, a transitive permutation group $S_1$ of
degree $r$, and embeddings
\[
  \widehat K_i\hookrightarrow J\wr S_1,
  \qquad U_i\cong W^r,
\]
with the asserted conclusion for every member of $\mathcal X_1$.  Apply this embedding in each of the $m$ coordinates of $\widehat K_i\wr S_0$.  By the associativity of wreath products in their permutation actions \cite[Theorem~7.26]{Rotman1995},
\[
  K\hookrightarrow (J\wr S_1)\wr S_0
   \cong J\wr(S_1\wr S_0),
  \qquad U\cong (W^r)^m\cong W^{rm}.
\]
In its imprimitive action, $S_1\wr S_0$ is transitive of degree $rm$. Since every $a\in\mathcal X$ has trivial permutation part and its nonidentity components belong to $\mathcal X_1$, the inductive conclusion applies to every nonidentity component in the final base group.  This proves the lemma.
\end{proof}

Apply Lemma~\ref{lem:component-reduction} to $(H,V,\{h\})$.  We obtain
\begin{equation}
  H\hookrightarrow J\wr S=J^n\rtimes S,
  \qquad V\cong W^n,
  \label{eq:wreath}
\end{equation}
where $W$ is a faithful irreducible quasi-primitive $J$-module.  Write $h=(c_1,\ldots,c_n)$.  Every $c_i\ne1$ is an involution in $\Fitt(J)$ and its conjugate fixed spaces cover $W$.

Wolf's theorem \cite[Theorem~2.1]{Wolf2014} now leaves exactly three possibilities:
\begin{enumerate}[label=(\roman*)]
\item $|W|=q^2$, where $q$ is a Mersenne prime, and $J=\Fitt(J)=SD\times C\leq\Gamma(W)$, where $C$ is cyclic of odd order and $SD$ is semidihedral of order $4(q+1)$;
\item $|W|=25$, $\Fitt(J)\cong Q_8\circ C_4$, and $J/\Fitt(J)\cong C_3$ or $S_3$;
\item $|W|=81$, $\Fitt(J)\cong Q_8\circ D_8$, and, since $J$ is solvable, $J/\Fitt(J)$ is isomorphic to $C_5$, $D_{10}$, or the Frobenius group of order $20$.
\end{enumerate}
Here $\circ$ denotes the central product obtained by identifying the central involutions of the two factors, and $D_8$ denotes the dihedral group of order $8$.  Moreover, for every
$0\ne w\in W$,
\begin{equation}
  C_{\Fitt(J)}(w)=\langle t_w\rangle\cong C_2.
  \label{eq:unique-fixer}
\end{equation}

The next lemma identifies the full $J$-conjugacy class of each nonidentity component of $h$; this will allow us to control all components of the elements in $h^H$.

\begin{lemma}
\label{lem:component-class}
Let $t=c_i$ be a nonidentity component of $h$.  Then
\[
  t^J=\{t_w:0\ne w\in W\}.
\]
\end{lemma}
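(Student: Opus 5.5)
Both inclusions follow from three facts: $t$ lies in $\Fitt(J)$, its conjugate fixed spaces cover $W$, and \eqref{eq:unique-fixer} holds. From Lemma~\ref{lem:component-reduction} we know that $t$ is an involution in $\Fitt(J)$ and that $W=\bigcup_{j\in J}C_W(t^j)$. Since $\Fitt(J)\normal J$, every conjugate $t^j$ is also an involution in $\Fitt(J)$.

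\emph{Inclusion $\{t_w\}\subseteq t^J$.} Fix $0\ne w\in W$. By the covering property there is $j\in J$ with $w\in C_W(t^j)$, so $t^j\in C_{\Fitt(J)}(w)$. By \eqref{eq:unique-fixer} this centralizer is $\langle t_w\rangle\cong C_2$. Since $t^j\ne1$, we get $t^j=t_w$. Hence every $t_w$ lies in $t^J$.

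\emph{Inclusion $t^J\subseteq\{t_w\}$.} Let $s=t^j$. Then $s$ is a nonidentity element of $\Fitt(J)$. In odd characteristic an involution acting on $W$ is diagonalizable with eigenvalues $\pm1$, and $s\ne1$ on the faithful module $W$. The only way $C_W(s)$ could vanish is if $s$ acts as $-1$, which would make $s$ central in $J$ and fixing no nonzero vector. Conjugation preserves fixed spaces up to translation, so $C_W(s)=C_W(t)^j$, and it suffices to show $C_W(t)\ne0$. This follows from the covering property: a nonzero $w$ lies in some $C_W(t^{j'})$, so $t^{j'}$ fixes a nonzero vector. Hence $t$, being conjugate to $t^{j'}$, fixes the nonzero vector $w^{j'^{-1}}$. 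Taking $0\ne w\in C_W(s)$, we have $s\in C_{\Fitt(J)}(w)=\langle t_w\rangle$, so $s=t_w$.

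The only point needing care is that the covering hypothesis for $t$ rules out $t$ acting as $-1$ (the central involution $z$ of $\Fitt(J)$ acts as $-1$ in cases (i)--(iii)). The argument above handles this directly, since the covering guarantees a nonzero fixed vector for $t$. No case analysis over Wolf's three cases is needed beyond \eqref{eq:unique-fixer}. I do not expect a real obstacle; the main thing is to apply \eqref{eq:unique-fixer} only to nonzero $w$ and only to nonidentity elements of $\Fitt(J)$.
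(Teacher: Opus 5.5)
Your proof is correct and follows the paper's argument. The first inclusion is identical to the paper's. For the reverse inclusion, you transport a nonzero fixed vector of $t$ to $s=t^j$ via $C_W(t^j)=C_W(t)^j$ and then apply \eqref{eq:unique-fixer}, just as the paper transports the fixed vector $w_0$ of $c=t_{w_0}\in t^J$ to its conjugates. Your remarks about diagonalizability and $s$ acting as $-1$ are harmless but unnecessary, since the covering property already supplies the nonzero fixed vector.
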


\begin{proof}
For $0\ne w\in W$, Lemma~\ref{lem:component-reduction} gives $j\in J$ such that $t^j$ fixes $w$.  By \eqref{eq:unique-fixer}, $t^j=t_w$; hence $\{t_w:0\ne w\in W\}\subseteq t^J$.  Conversely, fix $c=t_{w_0}\in t^J$.  If $s=c^k\in t^J$, then $s$ fixes $w_0^k$, so \eqref{eq:unique-fixer} gives $s=t_{w_0^k}$.  This proves the reverse inclusion.
\end{proof}

\section{The three cases and their commutator forms}

We begin by defining the normal subgroup generated by the conjugates of $h$ and recording the information about its components needed in all three cases.  Put
\[
  P_0=\langle h^H\rangle.
\]
Then $P_0$ is normal in $H$.  Also $h\in O_2(H)$, and the normality of $O_2(H)$ gives $h^H\subseteq O_2(H)$.  Hence $P_0\le O_2(H)$.

Let $B_0=J^n$ be the base group in \eqref{eq:wreath}; it is normal in $J\wr S$.  By Lemma~\ref{lem:component-reduction}, $h\in B_0$, so $h^H\subseteq B_0$. By Lemma~\ref{lem:component-class}, every nonidentity component $c_i$ of $h$ belongs to $\{t_w:0\ne w\in W\}$, and this set is $J$-invariant.  If $y=h^g\in h^H$, wreath-product conjugation permutes the components of $h$ and conjugates each component by an element of $J$.  Thus every component of $y$ is either $1$ or some $t_w$.  Since $t_w\in\Fitt(J)$ by \eqref{eq:unique-fixer}, it follows in particular that
\[
  P_0\le \Fitt(J)^n.
\]

Before treating the three cases, we recall the terminology for bilinear and symplectic forms used below, following \cite[Chapters~2 and~3]{Grove2002}.  A bilinear form on an $\Ftwo$-space $E$ is a map $B:E\times E\to\Ftwo$ which is linear in each variable.  It is \emph{alternating} if $B(v,v)=0$ for every $v\in E$. Its radical is
\[
  \rad(E)=\{v\in E:B(v,w)=0\text{ for every }w\in E\}.
\]
The form is nondegenerate if $\rad(E)=0$; a space with a nondegenerate alternating form is called a \emph{symplectic space}.  In each component group $X$ used below, $X/Z(X)$ is an elementary abelian $2$-group and $X'=\langle z\rangle$ has order $2$.  Then
\begin{equation}
  [a,b]=z^{B_X(aZ(X),bZ(X))}
  \label{eq:comm-form-definition}
\end{equation}
defines an alternating bilinear form on $X/Z(X)$.  If $xZ(X)$ belongs to its radical, then $x$ commutes with every element of $X$, so $x\in Z(X)$; hence $B_X$ is nondegenerate.

We first consider the cases $|W|=25$ and $|W|=81$.

\begin{lemma}
\label{lem:central-products}
Assume the case $|W|=25$ or $81$, write $F=\Fitt(J)$ and $E=F/Z(F)$, and let $B$ be the form in \eqref{eq:comm-form-definition}.  Then $B$ is nondegenerate.  The images in $E$ of the involutions $t_w$ form a set $T\subseteq E\setminus\{0\}$ which spans $E$, and every member of $T$ is the image of $t_w$ for some $0\ne w\in W$.
\end{lemma}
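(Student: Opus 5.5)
We work with the explicit structure of $F$ in cases (ii) and (iii). In case (ii), $F\cong Q_8\circ C_4$, so $Z(F)\cong C_4$ and $E=F/Z(F)$ is elementary abelian of order $4$. In case (iii), $F\cong Q_8\circ D_8$ is extraspecial of order $32$, so $Z(F)=\langle z\rangle$ has order $2$ and $E$ has order $16$. In both cases $F'=\langle z\rangle$ has order $2$ and $F/Z(F)$ is elementary abelian, so \eqref{eq:comm-form-definition} applies. The nondegeneracy of $B$ then follows from the general remark after \eqref{eq:comm-form-definition}: if $xZ(F)$ lies in the radical, then $x$ commutes with all of $F$, hence $x\in Z(F)$. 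No case analysis is needed for this part; we only need to confirm that $F'$ has order $2$ and $F/Z(F)$ is elementary abelian, which is standard for these central products.

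Next we define $T$ as the set of images $t_wZ(F)$ for $0\ne w\in W$. The last assertion of the statement is then true by definition. We must check two things. First, $T\subseteq E\setminus\{0\}$, i.e.\ $t_w\notin Z(F)$. Suppose $t_w\in Z(F)$. Since $W$ is faithful and quasi-primitive, $Z(F)$ acts on $W$ by scalars: it is abelian and normal in $J$, so $W_{Z(F)}$ is homogeneous, and a faithful homogeneous module for an abelian group is a sum of copies of a single faithful irreducible, hence cyclic action; over $\mathbf F_p$ with the relevant centralizers this is scalar action. A nontrivial scalar fixes no nonzero vector, contradicting $w^{t_w}=w$. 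So $t_w\notin Z(F)$. (Alternatively, an involution in the cyclic group $Z(F)$ is $z$, and $z$ acts as $-1$ since $W$ is faithful for $F$.)

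Second, $T$ spans $E$. By Lemma~\ref{lem:component-class}, $T$ is the image of the full conjugacy class $t^J$, so it is a $J$-invariant subset, and its span $\langle T\rangle$ is a $J$-invariant subspace of $E$, i.e.\ an $\Ftwo J$-submodule. Let $N$ be its preimage in $F$; then $N\normal J$. It suffices to show $N=F$. Here we use the quasi-primitivity of $W$ together with the covering property: every nonzero $w$ is fixed by some element of $N$ outside $Z(F)$, namely $t_w$. If $N<F$, then $N$ is a proper normal subgroup of $J$ containing $Z(F)$. Using the known structure, in case (ii) $E$ has order $4$ and $J/F$ contains an element of order $3$ acting irreducibly on $E$, so any nonzero $J$-invariant subspace is all of $E$. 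In case (iii), $J/F$ contains an element of order $5$ acting on $E$ of order $16$ without fixed nonzero vectors, and $16-1=15$ forces the module to be irreducible; indeed the $\Ftwo C_5$-module of dimension $4$ is irreducible because $2$ has order $4$ mod $5$. So again $\langle T\rangle=E$, since $T\ne\emptyset$.

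The main obstacle is the irreducibility of $E$ as a $J/F$-module, which relies on the precise form of $J/F$ from Wolf's classification. The key facts are that the action of $J/F$ on $E$ is faithful, so the order-$3$ or order-$5$ element acts nontrivially, and that this action is fixed-point-free. For faithfulness, we would argue via $C_J(F)\le F$ for solvable $J$: elements centralizing $E$ act on $F$ by automorphisms trivial on both $F/Z(F)$ and $Z(F)$, hence form a $2$-group and lie in $F$. This gives faithfulness of $J/F$ on $E$ modulo $O_2$, which is enough for elements of odd order. Fixed-point-freeness is automatic for an element of prime order $3$ on a space of dimension $2$, and for prime order $5$ on a space of dimension $4$, since both are minimal-dimension nontrivial modules.
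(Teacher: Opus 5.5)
Your proposal is correct. It handles nondegeneracy and $T\subseteq E\setminus\{0\}$ essentially as the paper does, but proves the spanning statement by a different route.

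\textbf{The paper's route.} It works entirely inside $F$. It counts the noncentral involutions of $Q_8\circ C_4$ and $Q_8\circ D_8$ (six and ten). Each such $d$ is $F$-conjugate to $dz$, and $z$ acts as $-I$, so each has a fixed space of half the dimension of $W$. The sets $C_W(u)\setminus\{0\}$, $u\in t^J$, partition $W\setminus\{0\}$, which gives $|t^J|=24/4=6$, respectively $80/8=10$. Hence $t^J$ is exactly the set of noncentral involutions. So $T$ is explicitly all three nonzero vectors of $E$, respectively the five listed vectors, and these span.

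\textbf{Your route.} You note that $T$ is a nonempty $J$-invariant subset of $E\setminus\{0\}$, since $t_w^j=t_{w^j}$ by \eqref{eq:unique-fixer}. You then show $E$ is an irreducible $\Ftwo[J/F]$-module:
\begin{enumerate}
\item Wolf's description of $J/F$ supplies an element of order $3$, respectively $5$.
\item Such an element centralizes $Z(F)$ automatically, because $\operatorname{Aut}(Z(F))$ is a $2$-group.
\item If it also centralized $E$, coprime action would make it centralize $F$, putting it in $C_J(F)\le F$, which is impossible.
\item A nontrivial $\Ftwo C_3$-module of dimension $2$, or $\Ftwo C_5$-module of dimension $4$, is irreducible.
\end{enumerate}

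\textbf{Comparison.} Your argument is shorter and avoids the square computations and the fixed-space counting. It does rely on the structure of $J/F$ from Wolf's theorem and on $C_J(F)\le F$ for solvable $J$. The paper's argument needs only the structure of $F$, $|W|$, and \eqref{eq:unique-fixer}. It also proves more: $t^J$ is the full set of noncentral involutions.

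\textbf{Two small repairs.}
\begin{itemize}
\item Quasi-primitivity directly gives that nonidentity elements of $Z(F)$ fix no nonzero vector. That is all you need, so the hand-waved ``scalar action'' claim is unnecessary.
\item In your alternative argument, $z$ acts as $-1$ because $z\in Z(J)$ (as $F'$ is normal in $J$) together with irreducibility and faithfulness. Faithfulness alone does not give it.
\end{itemize}
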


\begin{proof}
The form $B$ is nondegenerate by the observation preceding the lemma. First suppose that $|W|=25$.  Thus $F=Q_8\circ C_4$.  Choose generators $a,b$ for $Q_8$ and a central element $c$ of order $4$ so that $a^2=b^2=c^2=z$ and $[a,b]=z.$ Then $Z(F)=\langle c\rangle\cong C_4$, $F'=\langle z\rangle$, and
\[
  E=F/Z(F)=\langle aZ(F),bZ(F)\rangle_{\Ftwo}\cong\Ftwo^2.
\]

We next determine the noncentral involutions of $F$.  The three nonzero cosets of $Z(F)$ are represented by $a$, $b$, and $ab$.  Since $[a,b]=z$, we have $(ab)^2=z$, and thus $x^2=z$ for $x\in\{a,b,ab\}$.  Since $c$ is central and $c^2=z$, for every integer $k$ with $0\le k\le3$ we have $(xc^k)^2=x^2c^{2k}=z^{k+1}.$ It follows that $xc^k$ is an involution exactly when $k$ is odd.  Each nonzero coset of $Z(F)$ therefore contains exactly two involutions, and
$F$ has exactly six noncentral involutions.

Since $F'=\langle z\rangle$ is characteristic in $F=\Fitt(J)$, it is normal in $J$, and so $z\in Z(J)$.  On the irreducible $J$-module $W$, the element $z$ acts as $I$ or $-I$, and faithfulness excludes the first possibility.  Thus $z$ acts as $-I$.

Let $d$ be a noncentral involution of $F$.  Since $dZ(F)\ne0$ and the commutator form is nondegenerate, there is $r_0\in F$ such that $[d,r_0]=z$.  With the convention $x^g=g^{-1}xg$, this gives
\[
  d^{r_0}=d[d,r_0]=dz.
\]
Thus $d$ and $dz$ are conjugate.  Since $z$ acts as $-I$, the fixed space of $dz$ is the $-1$-eigenspace of $d$, and hence is complementary to $C_W(d)$.  Note that conjugate elements have similar representing matrices, so $d$ and $dz$ have fixed spaces of the same dimension.  Since $\dim_{\mathbf F_5}W=2$, both fixed spaces are lines.

Fix a nonidentity component $t$ of $h$ and put $\mathcal C=t^J$.  By Lemma~\ref{lem:component-class},  $ \mathcal C=\{t_w:0\ne w\in W\}.$ The sets $C_W(u)\setminus\{0\}$, for $u\in\mathcal C$, partition $W\setminus\{0\}$.  Indeed, if $0\ne w\in W$, then $t_w\in\mathcal C$ and $w\in C_W(t_w)$, so these sets cover $W\setminus\{0\}$.  If the same nonzero vector $w$ is fixed by $u,v\in\mathcal C$, then $u,v\in C_F(w)=\langle t_w\rangle.$ Since $u$ and $v$ are nonidentity involutions, both equal $t_w$, and hence the sets are pairwise disjoint.  Every $u\in\mathcal C$ fixes a nonzero vector, whereas $z$ does not.  Since $z$ is the unique central involution of $F$, every $u\in\mathcal C$ is noncentral.  By the preceding paragraph, $C_W(u)$ is a line.  Therefore
\[
  |\mathcal C|=\frac{5^2-1}{5-1}=6.
\]
Thus $\mathcal C$ consists of all six noncentral involutions of $F$. Their images in $E$ are the three nonzero vectors of $E$, so these images span $E$.

Now suppose that $|W|=81$.  Here $F=Q_8\circ D_8, Z(F)=F'=\langle z\rangle,$ and $|z|=2.$ Choose generators $a,b$ for $Q_8$ and $r,s$ for $D_8$ so that the two factors commute and
\[
  a^2=b^2=r^2=z,\qquad s^2=1,\qquad [a,b]=[r,s]=z.
\]
Consequently
\[
  E=F/Z(F)
   =\langle\bar a,\bar b,\bar r,\bar s\rangle_{\Ftwo}
   \cong\Ftwo^4,
\]
where bars denote images modulo $Z(F)$.

The element $a^xb^yr^us^v$, where $x,y,u,v\in\Ftwo$, maps to the vector $(x,y,u,v)$ in this basis.  Using
$(gh)^2=g^2h^2[g,h]^{-1}$ when $[g,h]$ is central, and using the fact that the two factors commute, we obtain
\[
  (a^xb^yr^us^v)^2=z^{x+y+xy+u+uv}.
\]
Reading the exponent modulo $2$, the nonzero vectors represented by elements of square $1$ are exactly
\[
  (0,0,0,1),\ (0,0,1,1),\ (0,1,1,0),\
  (1,0,1,0),\ (1,1,1,0).
\]

For any $g\in F$, the two elements mapping to $gZ(F)$ are $g$ and $gz$. Since $z$ is central of order $2$, $(gz)^2=g^2$.  Each of the five vectors just listed therefore corresponds to two noncentral involutions, whereas the two elements mapping to any other nonzero vector have square $z$ and order $4$.  Hence $F$ has exactly ten noncentral involutions, occurring in five pairs modulo $Z(F)$.

As above, $z\in Z(J)$ and acts as $-I$ on $W$. Nondegeneracy shows that each noncentral involution $d$ is conjugate to $dz$.  Their fixed spaces are complementary and have equal dimension, and hence each has dimension $2$ because $\dim_{\mathbf F_3}W=4$.

Put $\mathcal C=t^J$.  By the same argument as in the case $|W|=25$, the sets $C_W(u)\setminus\{0\}$, for $u\in\mathcal C$, partition $W\setminus\{0\}$.  Hence
\[
  |\mathcal C|=\frac{3^4-1}{3^2-1}=10.
\]
The central involution fixes no nonzero vector, so $\mathcal C$ consists of all ten noncentral involutions.  Their images in $E$ are therefore the five vectors displayed above. It is not difficult to see that these vectors span $E.$
\end{proof}

We now consider the case $|W|=q^2$.  Write $q+1=2^m$.  The Sylow $2$-subgroup $SD$ of $J$ is
\[
 SD=SD_{2^{m+2}}=\langle r,t\mid r^{2^{m+1}}=t^2=1,
       \ trt=r^{2^m-1}\rangle.
\]
In the proof of \cite[Theorem~2.1]{Wolf2014}, Wolf shows that the $q+1$ noncentral involutions of the semidihedral Sylow $2$-subgroup form a single conjugacy class.  With the presentation above, these involutions are $r^{2j}t$, and hence
\begin{equation}
  t^J=\{r^{2j}t:j\in\mathbf Z/2^m\mathbf Z\}.
  \label{eq:semidihedral-class}
\end{equation}

\begin{lemma}
\label{lem:semidihedral}
Assume the case $|W|=q^2$, where $q$ is a Mersenne prime.  There is a subgroup $K\normal H$, contained in $P_0$, with the following properties.  In the $i$th copy of $J$, there are subgroups $D_i=\langle u_i,t_i\rangle$ and $A_i=\langle u_i^4\rangle$ such that $D_i/A_i\cong D_8$, and
\[
  \bar h=hK\ne1,
  \qquad
  P_0/K\hookrightarrow
  (D_1/A_1)\times\cdots\times(D_n/A_n)\cong D_8^n.
\]
The embedding commutes with conjugation by $H/K$.  In the $i$th factor put
\[
  E_i=(D_i/A_i)/Z(D_i/A_i),\qquad
  T_i=\{(t_iA_i)Z(D_i/A_i),(u_it_iA_i)Z(D_i/A_i)\}.
\]
Then $E_i$ is a two-dimensional symplectic space, $T_i$ is a basis of $E_i$, and every member of $T_i$ is the image of some $t_w$.
\end{lemma}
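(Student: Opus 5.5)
The plan is to work inside the base group $B_0=J^n$ and cut the semidihedral Sylow subgroup down to a dihedral subgroup of order $8$. Write $u=r^2$ in $SD$ and set $D=\langle u,t\rangle$ and $A=\langle u^4\rangle$. Let $D_i$, $A_i$, $u_i$, $t_i$ be the corresponding objects in the $i$th copy of $J$.

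\textbf{Step 1: the structure of $D$.} Since $trt=r^{2^m-1}$, we get $tut=r^{2^{m+1}-2}=u^{-1}$. Hence $D$ is dihedral of order $2^{m+1}$, with cyclic subgroup $\langle u\rangle$ of order $2^m$.

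\begin{itemize}
\item Because $q\ge3$, we have $m\ge2$, so $A$ has index $4$ in $\langle u\rangle$. Then $D/A$ is generated by an element $uA$ of order $4$ and an involution $tA$ inverting it, so $D/A\cong D_8$.
\item When $m=2$ we have $A=1$ and $D=D_8$ already.
\item $D$ has index $2$ in $SD$, so $D\normal SD$. Since $J=SD\times C$ and $C$ centralizes $SD$, we get $D\normal J$.
\item For $m\ge3$, $\langle u\rangle$ is the unique cyclic subgroup of index $2$ in the dihedral group $D$, so it is characteristic in $D$. Hence $A$ is characteristic in $D$ and therefore normal in $J$. For $m=2$ this is trivial.
\end{itemize}

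\textbf{Step 2: locating $P_0$.} By \eqref{eq:semidihedral-class}, $t^J=\{u^jt\}\subseteq D$. By the discussion before Lemma~\ref{lem:central-products}, every component of every element of $h^H$ is $1$ or lies in $t^J$. Hence
\[
P_0\le D_1\times\cdots\times D_n .
\]
Both $\prod_iD_i$ and $\prod_iA_i$ are normalized by $J\wr S$. The reason is that conjugation by an element of $J\wr S$ permutes coordinates, where all copies are identified with the same $J$, and conjugates each coordinate by an element of $J$; Step 1 shows $D$ and $A$ are normal in $J$.

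\textbf{Step 3: the subgroup $K$ and the embedding.} Put
\[
K=P_0\cap\prod_iA_i .
\]
Then $K\normal H$ and $K\le P_0$. The map
\[
P_0/K\hookrightarrow\prod_iD_i/A_i\cong D_8^n
\]
is induced by inclusion into $\prod_iD_i$, followed by the quotient by $\prod_iA_i$. It is injective by the definition of $K$, and it commutes with conjugation by $H/K$ because $H$ normalizes both products.

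To see that $\bar h\ne1$, note that $h\ne1$, so $h$ has a nonidentity component $u^jt$. This component is not in $A_i$, since $A_i\le\langle u_i\rangle$ and $u^jt\notin\langle u\rangle$. Hence $h\notin\prod_iA_i$.

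\textbf{Step 4: the symplectic spaces $E_i$.} In $D_8=D_i/A_i$ the centre is $\langle u_i^2A_i\rangle$, the quotient is $\Ftwo^2$, and the derived group has order $2$. The remark following \eqref{eq:comm-form-definition} then makes $E_i$ a two-dimensional symplectic space.

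The images of $t_i$ and $u_it_i$ in $E_i$ are distinct and nonzero, since $u_i\notin Z(D_i/A_i)$ modulo $A_i$. Therefore they form a basis $T_i$ of $E_i$.

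Both $t$ and $ut=r^2t$ lie in $t^J$ by \eqref{eq:semidihedral-class}. By Lemma~\ref{lem:component-class}, $t^J=\{t_w:0\ne w\in W\}$, so each member of $T_i$ is the image of some $t_w$.

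\textbf{Main obstacle.} The delicate point is the bookkeeping in Steps 2 and 3. The wreath embedding must identify all coordinate copies of $J$ compatibly, so that $D_i$ and $A_i$ are carried to each other by $H$. We also need $A$ to be normal in $J$ and not just in $SD$, which is where the characteristic-subgroup argument of Step 1 is essential.
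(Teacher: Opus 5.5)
Your proof is correct. Its skeleton is the paper's: the same $u=r^2$, $D=\langle u,t\rangle$, $A=\langle u^4\rangle$ and $K=P_0\cap\prod_iA_i$. You also take the product of quotient maps as the embedding. You show $\bar h\neq1$ the same way, since a nonidentity component $u_i^jt_i$ lies outside $\langle u_i\rangle\supseteq A_i$. And you use the same parity argument to see that $T_i$ is a basis.

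The one step you argue differently is the $H$-invariance of $\prod_iD_i$ and $\prod_iA_i$, which you rightly single out as the main obstacle.
\begin{itemize}
\item You get $D\normal J$ at once from $|SD:D|=2$ and $J=SD\times C$. You then observe that conjugation in $J\wr S$ only permutes coordinates and applies inner automorphisms of $J$ coordinatewise, so $D^n$ and $A^n$ are normal in $J\wr S$.
\item The paper instead computes $c^rc=u^{2^{m-1}-1}$ to show that $D$ is the normal closure in $J$ of every member of $t^J$. It then uses \eqref{eq:unique-fixer} to show that any $g\in H$ carrying $W_i$ to $W_j$ maps $\{t_w:0\ne w\in W_i\}$ onto $\{t_w:0\ne w\in W_j\}$. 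Hence $D_i^g=D_j$, and $A_i^g=A_j$ because $A_i$ is characteristic in $D_i$.
\end{itemize}
Your route is shorter and avoids the computation. It does require checking that the iterated embedding of Lemma~\ref{lem:component-reduction} really has the standard wreath-product form on the base group, which it does. The paper's route characterizes $D_i$ intrinsically from the module $W_i$, so it never has to track how the coordinate copies of $J$ are identified.

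Two small points:
\begin{itemize}
\item Your separate treatment of $m=2$ is unnecessary, since $\langle u\rangle$ is already the unique cyclic subgroup of index $2$ in $D_8$; it is harmless.
\item In Step 4 you justify only that the images of $t_i$ and $u_it_i$ are distinct. That they are nonzero also needs a word: both are reflections, hence noncentral in $D_i/A_i$.
\end{itemize}
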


\begin{proof}
Write $u=r^2$ and $D=\langle u,t\rangle$.  The defining relation for $SD$ gives $|u|=2^m$ and $tut=u^{-1},$ so $D$ is dihedral of order $2^{m+1}$.  Moreover, if $c=u^jt\in t^J$, then direct calculation gives $c^r c=u^{2^{m-1}-1}.$ The exponent is odd, and hence this element generates $\langle u\rangle$. Thus the normal closure of $c$ contains $u$ and then $t=u^{-j}c$; conversely, all the conjugates in
\eqref{eq:semidihedral-class} lie in $D$.  Therefore $D$ is the normal closure in $J$ of every member of $t^J$.

The subgroup $\langle u\rangle$ is the unique cyclic subgroup of index $2$ in $D$, and is therefore characteristic in $D$.  It follows that $A=\langle u^4\rangle$ is characteristic in $D$.  The element $uA$ has order $4$, so
\[
  D/A=\langle uA,tA\rangle\cong D_8,
  \qquad Z(D/A)=\langle u^2A\rangle.
\]
Modulo this center, the elements $u^jtA$ have one of the two images
\[
  (tA)Z(D/A),\qquad (utA)Z(D/A),
\]
according as $j$ is even or odd.  These are distinct and span $(D/A)/Z(D/A)$.

Use copies $D_i$ and $A_i$ in the $i$th factor of the base group $J^n$, and put
\[
  D_0=D_1\times\cdots\times D_n,\qquad
  A_0=A_1\times\cdots\times A_n,\qquad
  K=P_0\cap A_0.
\]
Write $W^n=W_1\oplus\cdots\oplus W_n$, let $J_i$ be the $i$th coordinate subgroup of $J^n$, and put
\[
  \mathcal C_i=\{t_w\in J_i:0\ne w\in W_i\}.
\]
If $g\in H$ carries $W_i$ onto $W_j$, the map $x\mapsto x^g=g^{-1}xg$ restricts to an isomorphism $J_i\to J_j$.  For $t_w\in\mathcal C_i$, the element $t_w^g$ fixes $w^g\in W_j$ and lies in the Fitting subgroup of $J_j$, so \eqref{eq:unique-fixer} gives $t_w^g=t_{w^g}$.  Thus $\mathcal C_i^g\subseteq\mathcal C_j$; applying the same argument to $g^{-1}$ gives equality.  Since $D_i$ is the normal closure of $\mathcal C_i$ and $A_i$ is characteristic in $D_i$, we have $D_i^g=D_j$ and $A_i^g=A_j$.  Hence $H$ normalizes both $D_0$ and $A_0$.

Every component of every generator in $h^H$ is either $1$ or some $t_w$, so $P_0=\langle h^H\rangle\le D_0$.  Since $P_0\normal H$ and $A_0$ is normalized by $H$, their intersection $K=P_0\cap A_0$ is normal in $H$. Let
\[
  \rho:D_0\longrightarrow
  (D_1/A_1)\times\cdots\times(D_n/A_n)
\]
be the product of the quotient maps.  Its kernel is $A_0$, and hence the kernel of $\rho|_{P_0}$ is $P_0\cap A_0=K$.  The first isomorphism theorem therefore gives the embedding
\[
  \iota:P_0/K\hookrightarrow
  (D_1/A_1)\times\cdots\times(D_n/A_n)\cong D_8^n,
  \qquad \iota(xK)=\rho(x).
\]
The map $\rho$ commutes with conjugation by $H$.  Elements of $K$ have trivial image under $\rho$, so the resulting conjugation action on the image depends only on the coset in $H/K$.  Thus $\iota$ commutes with the conjugation actions of $H/K$.

Since $h\ne1$ and has trivial permutation part, it has a nonidentity component.  Such a component has the form $u_i^jt_i$ and lies outside the rotation subgroup $\langle u_i\rangle$, whereas $A_i\le\langle u_i\rangle$. Thus $h\notin K$ and $hK\ne1$.  Finally, in
\[
  E_i=(D_i/A_i)/Z(D_i/A_i),
\]
the image of $u_i^jt_iA_i$ is $(t_iA_i)Z(D_i/A_i)$ or $(u_it_iA_i)Z(D_i/A_i)$ according as $j$ is even or odd.  These are distinct nonzero vectors in the two-dimensional space $E_i$, and hence form a basis.  The class $t^J$ contains elements $u_i^jt_i$ with $j$ even and elements $u_i^jt_i$ with $j$ odd.  Hence Lemma~\ref{lem:component-class} shows that every member of this basis is the image of some $t_w$.  The commutator form on $E_i$ is nondegenerate and alternating by
\eqref{eq:comm-form-definition}, so $E_i$ is symplectic.
\end{proof}

We now use common notation for the three cases.  In the cases $|W|=25$ and $|W|=81$, set $\bar H=H,$  $\bar h=h$ and $P=P_0.$ In the case $|W|=q^2$, set $\bar H=H/K,$ $\bar h=hK$ and  $P=P_0/K.$ Let $X_i$ denote the $i$th component group used below: $F$ in the cases $|W|=25$ and $|W|=81$, and $D_8$ in the case $|W|=q^2$.  Write $E_i=X_i/Z(X_i),$ $X_i'=\langle z_i\rangle,$ and let $B_i$ be the commutator form on $E_i$.  The direct sum
\[
  \mathcal E=E_1\oplus\cdots\oplus E_n
\]
is equipped with the bilinear form
\[
  B((v_i),(w_i))=\sum_{i=1}^nB_i(v_i,w_i).
\]
Conjugation by $H$ induces an action of $\bar H$ on $\mathcal E$.  If $g\in H$ carries $X_i$ onto $X_j$, this action is given by $(xZ(X_i))^{\bar g}=x^gZ(X_j)$, where $x\in X_i$, $\bar g\in\bar H$, and $g\in H$ is a representative of $\bar g$.  In the case $|W|=q^2$, independence of the representative follows from Lemma~\ref{lem:semidihedral}. Thus different summands are orthogonal, that is,  $B(E_i,E_j)=0$ for $i\ne j$.
The group $\bar H$ acts transitively on the set of summands and preserves $B$, that is, for any $v,w\in\mathcal E$ and $\bar g\in\bar H$, $B(v^{\bar g},w^{\bar g})=B(v,w).$ If an element of $\bar H$ carries $E_i$ onto $E_j$, then it carries $T_i$ onto $T_j$.  By Lemma~\ref{lem:central-products} and Lemma~\ref{lem:semidihedral}, each $T_i$ spans $E_i$, and each of its members is represented by some $t_w$.

Write $p=(p_1,\ldots,p_n)\in P$ in the product of the component groups and define
\[
  \pi:P\longrightarrow\mathcal E,
  \qquad
  \pi(p)=(p_1Z(X_1),\ldots,p_nZ(X_n)).
\]
Put
\[
  \mathcal Y=\pi(\bar h^{\,\bar H}),
  \qquad
  \mathcal U=\langle\mathcal Y\rangle_{\Ftwo}.
\]

The common properties needed below are collected in the following lemma.

\begin{lemma}
\label{lem:commutator-data}
We have $\pi(P)=\mathcal U$ and
\[
  [P,P]\le P\cap Z_0\le Z(P),
  \qquad
  Z_0=\langle z_1\rangle\times\cdots\times\langle z_n\rangle.
\]
The linear character $\beta_0\in\Irr(Z_0)$ defined by
\[
  \beta_0(z_1^{e_1}\cdots z_n^{e_n})=(-1)^{e_1+\cdots+e_n}
\]
is $\bar H$-invariant, and, for all $a,b\in P$,
\begin{equation}
  \beta_0([a,b])=(-1)^{B(\pi(a),\pi(b))}.
  \label{eq:commutator-formula}
\end{equation}
Moreover $\mathcal Y=\pi(\bar h)^{\bar H}$ is a nonzero $\bar H$-orbit.
\end{lemma}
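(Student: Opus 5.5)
The plan is to work componentwise inside $X_1\times\cdots\times X_n$. By construction (the identification $P_0\le\Fitt(J)^n$ in the cases $|W|=25,81$, and the embedding $\iota$ of Lemma~\ref{lem:semidihedral} in the case $|W|=q^2$), every $p\in P$ is a tuple $(p_1,\ldots,p_n)$ with $p_i\in X_i$. The map $\pi$ is then a group homomorphism. Indeed, $\pi$ is the product of the quotient maps $X_i\to X_i/Z(X_i)=E_i$, and each $E_i$ is elementary abelian. Its kernel on $P$ is $P\cap Z(X_1)\times\cdots\times Z(X_n)$.

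Since $P=\langle\bar h^{\bar H}\rangle$, the image $\pi(P)$ is the subgroup of $\mathcal E$ generated by $\mathcal Y$. Because $\mathcal E$ is an elementary abelian $2$-group, this subgroup is exactly the $\Ftwo$-span $\mathcal U$.

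Next I would treat the commutators. For $a,b\in P$ the commutator is computed componentwise, $[a,b]=([a_1,b_1],\ldots,[a_n,b_n])$. Each $[a_i,b_i]\in X_i'=\langle z_i\rangle$, so $[a,b]\in Z_0\cap P$. By the definition \eqref{eq:comm-form-definition}, $[a_i,b_i]=z_i^{B_i(\pi_i(a),\pi_i(b))}$. Applying $\beta_0$ and summing the exponents gives \eqref{eq:commutator-formula} directly from the definition of $B$.

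For $P\cap Z_0\le Z(P)$, note that each $z_i$ is central in $X_i$. Hence $Z_0$ is central in $\prod X_i$ and a fortiori in $P$. For the $\bar H$-invariance of $\beta_0$, conjugation by $g\in H$ carries $X_i$ onto some $X_j$. It therefore carries the derived subgroup $X_i'$ onto $X_j'$, which means $z_i^g=z_j$, since each has order $2$. So $\bar H$ permutes the generators $z_i$, and $\beta_0$, which counts exponents mod $2$, is invariant. In the case $|W|=q^2$ one must check that $X_i'=\langle u_i^2A_i\rangle$ maps to $X_j'$ under the induced action. This follows because $D_i^g=D_j$ and $A_i^g=A_j$ by Lemma~\ref{lem:semidihedral}, and derived subgroups are preserved.

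Finally, I would show that $\mathcal Y$ is a nonzero orbit. Equivariance of $\pi$ under the $\bar H$-action on $\mathcal E$ described before the lemma gives $\pi(\bar h^{\bar g})=\pi(\bar h)^{\bar g}$, so $\mathcal Y=\pi(\bar h)^{\bar H}$. For nonzeroness, $\bar h$ has a nonidentity component, which is some $t_w$ (or its image $u_i^jt_iA_i$). By Lemmas~\ref{lem:central-products} and~\ref{lem:semidihedral}, this maps to a member of $T_i\subseteq E_i\setminus\{0\}$; for $|W|=25,81$, $t_w$ is noncentral as shown in Lemma~\ref{lem:central-products}.

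The main obstacle I anticipate is bookkeeping in the semidihedral case. There, $P$ is a quotient $P_0/K$ rather than a subgroup of $\prod X_i$, so every componentwise statement must be transported through $\iota$. One has to verify that $\iota$ intertwines the $\bar H$-action with the action on $\prod D_i/A_i$, which Lemma~\ref{lem:semidihedral} asserts.
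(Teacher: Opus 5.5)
Your proposal is correct and follows the same argument as the paper. It computes commutators componentwise in $\prod X_i$, where each $X_i'=\langle z_i\rangle$ is central, and obtains $\pi(P)=\mathcal U$ from $P=\langle\bar h^{\bar H}\rangle$. It gets invariance of $\beta_0$ from $\bar H$ permuting the $z_i$, and nonzeroness of $\mathcal Y$ from a nonidentity component of $\bar h$ having nonzero image in $E_i$. You simply make explicit the points the paper's short proof leaves implicit: that $\pi$ is a homomorphism, and that in the semidihedral case everything is transported through $\iota$.
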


\begin{proof}
Since $P=\langle\bar h^{\bar H}\rangle$, its image under $\pi$ is exactly $\mathcal U$.  Each $X_i$ has derived subgroup $\langle z_i\rangle$ in its center, and commutators in a direct product are computed componentwise. Hence $[P,P]\le P\cap Z_0\le Z(P)$.

The definition of $\beta_0$ gives a homomorphism $Z_0\to\{\pm1\}$ because the coordinates $e_i$ are added modulo $2$.  If $a=(a_i)$ and $b=(b_i)$, then
\[
  [a,b]=\prod_i[a_i,b_i]
       =\prod_i z_i^{B_i(\pi_i(a),\pi_i(b))}.
\]
Applying $\beta_0$ gives \eqref{eq:commutator-formula}.  Conjugation by $\bar H$ permutes the $z_i$ and preserves each commutator form, so it preserves both $\beta_0$ and $B$.  The definition of $\pi$ gives $\pi(x^g)=\pi(x)^g$, whence $\mathcal Y=\pi(\bar h)^{\bar H}$.  Finally $\bar h\ne1$ has a nonidentity component with nonzero image, so $0\notin\mathcal Y$.
\end{proof}

\section{A symplectic consequence of Rado's theorem}

We first recall the vector-space form of Rado's independent-representatives theorem \cite{Rado1942}.  A short proof is included.

\begin{lemma}
\label{lem:independent-representatives}
Let $A_1,\ldots,A_n$ be finite subsets of a vector space $L$.  If
\begin{equation}
  \dim\left\langle\bigcup_{i\in I}A_i\right\rangle\ge |I|
  \label{eq:rado-condition}
\end{equation}
for every $I\subseteq\{1,\ldots,n\}$, then there are $a_i\in A_i$ such that $a_1,\ldots,a_n$ are linearly independent.
\end{lemma}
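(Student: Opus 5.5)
We argue by induction on $n$, following the standard proof of Rado's theorem for matroids, specialized to linear independence in $L$. The case $n=0$ is vacuous. The case $n=1$ is immediate, because \eqref{eq:rado-condition} with $I=\{1\}$ forces $A_1$ to contain a nonzero vector. We may also assume every $A_i$ is minimal: if some $|A_i|\ge2$ and removing an element of $A_i$ preserves \eqref{eq:rado-condition}, we remove it. Since the sets are finite, this shrinking terminates. The aim is to show that once no element can be removed, every $A_i$ is a singleton. Then \eqref{eq:rado-condition} applied to $I=\{1,\ldots,n\}$ says that the $n$ chosen vectors span a space of dimension $n$, so they are linearly independent.

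The main step is the shrinking claim. Suppose $A_1$ contains two distinct elements $x$ and $y$, and that neither $A_1\setminus\{x\}$ nor $A_1\setminus\{y\}$ can replace $A_1$. Then there are index sets $I,I'\subseteq\{2,\ldots,n\}$ with
\[
  \dim\Big\langle (A_1\setminus\{x\})\cup\textstyle\bigcup_{i\in I}A_i\Big\rangle\le|I|,
  \qquad
  \dim\Big\langle (A_1\setminus\{y\})\cup\textstyle\bigcup_{i\in I'}A_i\Big\rangle\le|I'|.
\]
Write $U_1$ and $U_2$ for these two spans. Their sum $U_1+U_2$ contains all of $A_1$, because $x\ne y$, and it contains $\bigcup_{i\in I\cup I'}A_i$. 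Their intersection contains $\bigcup_{i\in I\cap I'}A_i$.

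Applying the modular law $\dim U_1+\dim U_2=\dim(U_1+U_2)+\dim(U_1\cap U_2)$ together with \eqref{eq:rado-condition}, we get
\[
  |I|+|I'|\ge\dim(U_1+U_2)+\dim(U_1\cap U_2)\ge (|I\cup I'|+1)+|I\cap I'|.
\]
The middle lower bound uses the index set $\{1\}\cup I\cup I'$, and the last one uses $I\cap I'$. The right-hand side equals $|I|+|I'|+1$, which is a contradiction. Hence some element of each non-singleton $A_i$ can be removed while preserving the condition, and the reduction finishes the proof.

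I expect this submodularity step to be the only delicate point. The key checks are that the intersection bound uses $\dim(U_1\cap U_2)\ge\dim\langle\bigcup_{i\in I\cap I'}A_i\rangle$, and that the degenerate case $I\cap I'=\emptyset$ is harmless, since then that term is simply $0$. Note also that the argument never uses the index $1$ specially: any non-singleton $A_i$ can play the role of $A_1$ after relabelling.
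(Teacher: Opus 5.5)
Your proof is correct, but it takes a different route from the paper's.

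The paper argues by induction on $n$ with a dichotomy:
\begin{itemize}
\item If some nonempty proper $I$ is \emph{tight} (equality in \eqref{eq:rado-condition}), it applies induction to the sets indexed by $I$. Their representatives form a basis of $M=\langle\bigcup_{i\in I}A_i\rangle$, and it applies induction again to the images of the remaining sets in $L/M$.
\item If no tight set exists, it picks any nonzero $a_n\in A_n$ and applies induction in $L/\langle a_n\rangle$. This quotient lowers each relevant dimension by at most one, and the strict inequalities absorb that loss.
\end{itemize}

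You instead use Rado's shrinking argument. You delete elements one at a time while preserving the condition. The modular law $\dim U_1+\dim U_2=\dim(U_1+U_2)+\dim(U_1\cap U_2)$ shows that a set containing two distinct elements $x\ne y$ always has a removable one. Once every set is a singleton, the condition for the full index set gives independence. The submodularity step is carried out correctly, including the boundary cases $I=\varnothing$ and $I\cap I'=\varnothing$.

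Your approach needs no quotient spaces and never invokes an inductive hypothesis. The opening ``by induction on $n$'' is therefore superfluous, since the pruning works directly for every $n$. Because your argument uses only submodularity of the rank function, it transfers verbatim to Rado's theorem for arbitrary matroids.

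The paper's proof is the Halmos--Vaughan-style argument familiar from Hall's marriage theorem. It is somewhat longer and relies on passing to quotients. In exchange, it builds the representatives by splitting along tight sets rather than by pruning the given sets.
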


\begin{proof}
We argue by induction on $n$.  The case $n=1$ follows from \eqref{eq:rado-condition}, since $A_1$ then contains a nonzero vector. Suppose first that equality holds in \eqref{eq:rado-condition} for some nonempty proper set $I$, and put
\[
  M=\left\langle\bigcup_{i\in I}A_i\right\rangle.
\]
By induction, the sets indexed by $I$ have independent representatives $a_i$, and these representatives form a basis of $M$, since $\dim M=|I|$.  Let $q:L\to L/M$ be the quotient map.  If $J\subseteq\{1,\ldots,n\}\setminus I$, then
\[
 \dim\left\langle\bigcup_{j\in J}q(A_j)\right\rangle
 =\dim\left\langle\bigcup_{k\in I\cup J}A_k\right\rangle-\dim M
 \ge |J|.
\]
Induction in $L/M$ therefore gives representatives $a_j\in A_j$ whose images are independent.  Together with the previously chosen $a_i$, they are independent in $L$.

It remains to consider the case in which no nonempty proper set gives equality.  Then every such set $I$ spans a space of dimension at least $|I|+1$.  Choose $0\ne a_n\in A_n$ and let $q:L\to L/\langle a_n\rangle$ be the quotient map.  For every $I\subseteq\{1,\ldots,n-1\}$, passage to this quotient decreases dimension by at most one, and hence
\[
  \dim\left\langle\bigcup_{i\in I}q(A_i)\right\rangle\ge |I|.
\]
Induction gives $a_i\in A_i$, for $i<n$, whose images are independent. It follows immediately that $a_1,\ldots,a_n$ are independent in $L$.
\end{proof}

For $0\ne y=(y_i)\in\mathcal E$, define
\[
  \mathcal C_y=
  \{(x_i)\in\prod_iT_i:y_i\in\langle x_i\rangle
     \text{ for every }i\}.
\]
Since the ground field is $\Ftwo$, this requires $y_i=x_i$ when $y_i\ne0$ and imposes no condition when $y_i=0$.

We now state the result used in the final argument.

\begin{theorem}
\label{thm:symplectic-selection}
At least one of the following holds:
\begin{enumerate}[label=(\roman*)]
\item $\displaystyle\bigcup_{y\in\mathcal Y}\mathcal C_y
      \ne\prod_iT_i$;
\item $\mathcal Y\cap\rad_B(\mathcal U)=\varnothing$, where $\rad_B(\mathcal U)=
  \{u\in\mathcal U:B(u,v)=0\text{ for every }v\in\mathcal U\}.$
\end{enumerate}
\end{theorem}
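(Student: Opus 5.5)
We argue by contradiction: assume that both (i) and (ii) fail. Then every transversal $x=(x_i)\in\prod_iT_i$ lies in some $\mathcal C_y$ with $y\in\mathcal Y$, and some $y_0\in\mathcal Y$ lies in $\rad_B(\mathcal U)$. The plan has two steps. First we show that $\mathcal U$ is totally isotropic. Then we use Lemma~\ref{lem:independent-representatives} to build a transversal that no member of $\mathcal U\setminus\{0\}$ can cover.

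\emph{Step 1: $\mathcal U$ is totally isotropic.} By Lemma~\ref{lem:commutator-data}, $\mathcal Y$ is a single $\bar H$-orbit, so $\mathcal U=\langle\mathcal Y\rangle$ is $\bar H$-invariant. Since $\bar H$ preserves $B$, the radical $\rad_B(\mathcal U)$ is also $\bar H$-invariant. It contains $y_0$, hence the whole orbit $\mathcal Y=y_0^{\bar H}$, and therefore its span $\mathcal U$. Thus $B(\mathcal U,\mathcal U)=0$.

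\emph{Step 2: reformulate the covering condition.} For a transversal $x$ put $L_x=\langle x_1\rangle\oplus\cdots\oplus\langle x_n\rangle\le\mathcal E$, where $x_i$ is placed in the summand $E_i$. By definition, $x\in\mathcal C_y$ means exactly that $y\in L_x$. So failure of (i) says that $L_x\cap\mathcal U\ne0$ for every transversal $x$. It therefore suffices to find $x$ with $L_x\cap\mathcal U=0$.

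\emph{Step 3: choose $x$ by Rado's theorem.} Work in $L=\mathcal E/\mathcal U$ with quotient map $q$, and put $A_i=q(T_i)$ (via $T_i\subseteq E_i\subseteq\mathcal E$). Suppose $x_i\in T_i$ have linearly independent images $q(x_i)$. Then a nonzero vector $\sum\epsilon_ix_i$ of $L_x$ cannot lie in $\mathcal U$, so $L_x\cap\mathcal U=0$ as required. We now verify condition \eqref{eq:rado-condition}. For $I\subseteq\{1,\ldots,n\}$ let $E_I=\bigoplus_{i\in I}E_i$. Since $T_i$ spans $E_i$ (Lemmas~\ref{lem:central-products} and~\ref{lem:semidihedral}),
\[
  \dim\Bigl\langle\bigcup_{i\in I}A_i\Bigr\rangle=\dim(E_I+\mathcal U)-\dim\mathcal U=\dim E_I-\dim(E_I\cap\mathcal U).
\]
The summands are pairwise orthogonal and each $B_i$ is nondegenerate, so $B$ restricted to $E_I$ is nondegenerate. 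By Step 1, $E_I\cap\mathcal U$ is totally isotropic in $E_I$, so its dimension is at most $\tfrac12\dim E_I$. Each $E_i$ has dimension $2$ or $4$, so the right-hand side is at least $\tfrac12\dim E_I\ge|I|$. Lemma~\ref{lem:independent-representatives} then yields the desired $x_i\in T_i$, which contradicts Step 2.

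The main obstacle is seeing that the covering condition (i) amounts to $L_x\cap\mathcal U\ne0$ for all transversals $x$. Once that is recognised, the Rado condition reduces to the elementary bound that an isotropic subspace of a symplectic space has at most half its dimension. The only point needing care is that orthogonality of distinct summands makes $B|_{E_I}$ nondegenerate, and this holds by construction of $B$.
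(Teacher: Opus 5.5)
Your proof is correct and follows the paper's argument essentially step for step. The steps are the same:
\begin{enumerate}
\item use the orbit and invariance argument to show that $B$ vanishes on $\mathcal U$;
\item bound $\dim(\mathcal U\cap\mathcal E_I)$ by $\tfrac12\dim\mathcal E_I\ge|I|$;
\item apply Lemma~\ref{lem:independent-representatives} in $\mathcal E/\mathcal U$ to get a transversal with $\mathcal U\cap\bigoplus_i\langle x_i\rangle=0$.
\end{enumerate}
The only thing you leave implicit is that $0\notin\mathcal Y$ (Lemma~\ref{lem:commutator-data}); this is what makes $(x_i)\in\mathcal C_y$ produce a nonzero vector of $L_x\cap\mathcal U$.
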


\begin{proof}
Suppose that some $y\in\mathcal Y$ belongs to $\rad_B(\mathcal U)$.  The set $\mathcal Y$ is one $\bar H$-orbit and both $B$ and $\mathcal U$ are $\bar H$-invariant.  Hence every member of $\mathcal Y$ belongs to the radical.  Since $\mathcal Y$ spans $\mathcal U$, we have $B(u,v)=0$ for all $u,v\in\mathcal U$.

Let $\mathcal E_I=\bigoplus_{i\in I}E_i$ and write $\dim E_i=2m_i$.  A subspace on which a nondegenerate alternating form vanishes has dimension at most half the dimension of the whole space. Applied to $\mathcal U\cap\mathcal E_I$, the dimension bound gives
\[
  \dim(\mathcal U\cap\mathcal E_I)\le\sum_{i\in I}m_i.
\]
Let $q:\mathcal E\to\mathcal E/\mathcal U$ be the quotient map.  Since $T_i$ spans $E_i$,
\[
  \left\langle\bigcup_{i\in I}q(T_i)\right\rangle=q(\mathcal E_I).
\]
The kernel of $q|_{\mathcal E_I}$ is $\mathcal U\cap\mathcal E_I$, and therefore
\[
 \dim\left\langle\bigcup_{i\in I}q(T_i)\right\rangle
 =\dim\mathcal E_I-\dim(\mathcal U\cap\mathcal E_I)
 \ge\sum_{i\in I}m_i\ge|I|.
\]
Lemma~\ref{lem:independent-representatives} gives $x_i\in T_i$ such that $x_i+\mathcal U$ are linearly independent in
$\mathcal E/\mathcal U$.  Equivalently,
\[
  \mathcal U\cap\bigoplus_i\langle x_i\rangle=0.
\]
If $(x_i)\in\mathcal C_y$ for some $y\in\mathcal Y$, then the nonzero vector $y$ belongs to both subspaces, a contradiction.  Thus (i) holds. If no member of $\mathcal Y$ belongs to the radical, then (ii) holds.
\end{proof}

\section{Completion of the proof}

We need one elementary consequence of Clifford theory.

\begin{lemma}
\label{lem:central-commutator-zero}
Let $P\normal L$, let $P'\le Z(P)$, and let $\lambda\in\Irr(P')$ be $L$-invariant.  Let $a\in P$.  Suppose that for every $y\in a^L$ there is $p_y\in P$ such that
\[
  \lambda([y,p_y])\ne1.
\]
Then some $\chi\in\Irr(L)$ satisfies $\chi(a)=0$.
\end{lemma}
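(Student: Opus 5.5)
The plan is to consider an irreducible constituent $\chi$ of $\lambda^L$ and show $\chi(a)=0$. Because $\lambda$ is $L$-invariant and $P'\normal L$, I will write $\chi_P=e\sum_{k}\theta_k$ by Clifford's theorem, where the $\theta_k\in\Irr(P)$ are the $L$-conjugates of one irreducible constituent $\theta$ of $\lambda^P$. Each $\theta_k$ lies over $\lambda$, since $\lambda$ is invariant. As $a\in P\normal L$, the restriction gives $\chi(a)=e\sum_k\theta_k(a)$. It therefore suffices to show $\theta'(a)=0$ for every $\theta'\in\Irr(P)$ lying over $\lambda$. This reduces the statement to a computation inside $P$ alone, except that I need the hypothesis for the conjugates $\theta^g$. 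Equivalently, I need $\theta(a^{g^{-1}})=0$, and $a^{g^{-1}}\in a^L$ is covered by the hypothesis. So I will prove the cleaner claim: for $y\in a^L$ and $\theta\in\Irr(P\mid\lambda)$, we have $\theta(y)=0$.

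For the key step, fix $y\in a^L$ and the element $p=p_y$ with $\lambda([y,p])\ne1$. Since $P'\le Z(P)$, the element $c=[y,p]$ is central in $P$. By Schur's lemma, $c$ acts in a representation $\mathfrak X$ affording $\theta$ as the scalar $\lambda(c)$; the scalar is $\lambda(c)$ because $\theta_{P'}$ is a multiple of $\lambda$, since $\lambda$ is linear and $P'$ is central. Then $y^p=y[y,p]=yc$, so
\[
  \theta(y)=\theta(y^p)=\theta(yc)=\lambda(c)\theta(y).
\]
The last equality uses $\mathfrak X(yc)=\mathfrak X(y)\lambda(c)I$. Since $\lambda(c)\ne1$, this gives $\theta(y)=0$.

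Combining the two steps: for any $g\in L$, $\theta^g(a)=\theta(gag^{-1})$, and $gag^{-1}\in a^L$ (with the convention $x^g=g^{-1}xg$). Hence $\theta^g(a)=0$, so every Clifford constituent vanishes at $a$, and $\chi(a)=0$. The only point requiring care is the well-definedness: $\lambda^L\ne0$ has some irreducible constituent, and its constituents on $P$ all lie over $\lambda$ because $\lambda$ is $L$-invariant. I expect no serious obstacle. The main thing to get right is the convention bookkeeping, namely that $y^p=y[y,p]$ under $[y,p]=y^{-1}p^{-1}yp$ and that conjugation of characters matches conjugation of elements within the single class $a^L$.
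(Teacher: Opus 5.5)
Your proof is correct and matches the paper's argument. Both show $\theta(y)=0$ for every $\theta\in\Irr(P\mid\lambda)$ and $y\in a^L$ via $\theta(y)=\theta(y^{p})=\lambda([y,p])\theta(y)$, and then apply Clifford's theorem to get $\chi(a)=0$. The only difference is the choice of $\chi$: the paper takes $\chi\in\Irr(L\mid\theta)$, while you take a constituent of $\lambda^L$, which amounts to the same thing. One small remark: once you know every Clifford constituent $\theta_k$ lies over the $L$-invariant $\lambda$, the key step at $y=a$ already gives $\theta_k(a)=0$, so your final conjugation step is harmless but not needed.
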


\begin{proof}
Let $\theta\in\Irr(P\mid\lambda).$  Frobenius reciprocity gives $[\theta_{P'},\lambda]>0$.  Since $P'\le Z(P)\le Z(\theta)$, Lemma~2.27(c) of \cite{Isaacs1976} gives
$\theta_{P'}=\theta(1)\nu$ for some linear character $\nu$ of $P'$; the positive inner product forces $\nu=\lambda$.

For $y\in a^L$, choose $p=p_y$.  With $[y,p]=y^{-1}y^p$, we have $y^p=y[y,p]$.  Since $\theta_{P'}=\theta(1)\lambda$, if $\rho$ is a representation affording $\theta$, then $\rho([y,p])=\lambda([y,p])I$.  Since $\theta$ is constant on $P$-conjugacy classes,
\[
  \theta(y)=\theta(y^p)
  =\operatorname{tr}\bigl(\rho(y)\rho([y,p])\bigr)
  =\lambda([y,p])\theta(y).
\]
Thus $\theta(y)=0$ for every $y\in a^L$.  Let $\chi\in\Irr(L\mid\theta)$, and choose $g_1,\ldots,g_t\in L$ so that $\theta^{g_1},\ldots,\theta^{g_t}$ are the distinct $L$-conjugates of $\theta$.  Clifford's theorem gives
\[
  \chi_P=e\sum_{i=1}^t\theta^{g_i}
\]
for some positive integer $e$.  Since $\theta^{g_i}(a)=\theta(a^{g_i^{-1}})=0$ for every $i$, it follows that $\chi(a)=0$.
\end{proof}

\begin{proof}[Proof of Theorem~\ref{thm:linear}]
Use the notation of Sections~2 and~3.  We first show that the case (i) of Theorem~\ref{thm:symplectic-selection} is impossible.  Choose arbitrary $x_i\in T_i$.  By Lemma~\ref{lem:central-products} and Lemma~\ref{lem:semidihedral}, choose $0\ne w_i\in W$ so that the involution $t_{w_i}$ has image $x_i$.  The fixed-space cover \eqref{eq:cover} gives $y\in h^H$ fixing $(w_1,\ldots,w_n)\in W^n$.

The element $y$ belongs to $P_0$.  Let $\bar y$ denote $y$ in the cases $|W|=25$ and $|W|=81$, and $yK$ in the case $|W|=q^2$.  Thus $\bar y\in P$ and $\pi(\bar y)\in\mathcal Y$.  If the $i$th component of $y$ is nonidentity, it lies in $\Fitt(J)$ and fixes $w_i$; by \eqref{eq:unique-fixer} it equals $t_{w_i}$ and therefore has image $x_i$ in $E_i$.  Consequently $(x_i)\in\mathcal C_{\pi(\bar y)}$.  Hence the sets $\mathcal C_y$, $y\in\mathcal Y$, cover $\prod_iT_i$, and the case (i) cannot occur.  Theorem~\ref{thm:symplectic-selection} therefore gives
\begin{equation}
  \mathcal Y\cap\rad_B(\mathcal U)=\varnothing.
  \label{eq:off-radical}
\end{equation}

Put $D=P'$ and let $\beta=\beta_0|_D$.  By Lemma~\ref{lem:commutator-data}, this is a $\bar H$-invariant linear character of $D$.  For each $\bar y\in\bar h^{\bar H}$, \eqref{eq:off-radical} gives $u\in\mathcal U$ with $B(\pi(\bar y),u)=1$.  Since $\pi(P)=\mathcal U$, choose $p\in P$ with $\pi(p)=u$.  Formula \eqref{eq:commutator-formula} gives
\[
  \beta([\bar y,p])=-1.
\]
Lemma~\ref{lem:central-commutator-zero}, applied to
$P\normal\bar H$, yields $\bar\psi\in\Irr(\bar H)$ with
$\bar\psi(\bar h)=0$.  In the cases $|W|=25$ and $|W|=81$, this is already a character of $H$.  In the case $|W|=q^2$, inflate $\bar\psi$ from $H/K$ to $H$.  In either case we obtain $\psi\in\Irr(H)$ with $\psi(h)=0$, as required.
\end{proof}

Theorem~\ref{thm:main} follows from Proposition~\ref{prop:linear-reduction} and Theorem~\ref{thm:linear}.

\section*{Acknowledgments}

ChatGPT was consulted during the preparation of this work for general language editing and for exploratory discussions of mathematical ideas. All mathematical results and proofs in the final manuscript were independently verified by the authors.

\bigskip
\noindent\textsc{School of Mathematical Sciences, Peking University,
Beijing 100871, People's Republic of China}

\noindent\textit{Email address of Quanfu Yan:}
\href{mailto:qyan5@kent.edu}{qyan5@kent.edu}

\noindent\textit{Email address of Jiping Zhang:}
\href{mailto:jzhang@math.pku.edu.cn}{jzhang@math.pku.edu.cn}

\end{document}